\newtheorem{thm}{Theorem}[section]
\newtheorem{prop}[thm]{Proposition}
\newtheorem{lem}[thm]{Lemma}
\theoremstyle{definition}
\newtheorem{defn}[thm]{Definition}
\theoremstyle{remark}
\theoremstyle{definition}
\theoremstyle{definition}
\newtheorem{ex}[thm]{Example}
\theoremstyle{definition}
\numberwithin{equation}{section} 
\title{Veech groups of flat surfaces with poles}
\author{Guillaume Tahar} 
\address[Guillaume Tahar]{Institut de Math{\'e}matiques de Jussieu - UMR CNRS 7586}
\email{guillaume.tahar@imj-prg.fr}
\date{November 26, 2017}
\keywords{Translation surface, Veech Group, Flat structure}
\begin{document}
\begin{abstract}
Flat surfaces that correspond to meromorphic $1$-forms with poles or to meromorphic quadratic differentials containing poles of order two and higher have infinite flat area. We classify groups that appear as Veech groups of translation surfaces with poles. We characterize those surfaces such that their $GL^{+}(2,\mathbb{R})$-orbit or their $SL(2,\mathbb{R})$-orbit is closed. Finally, we provide a way to determine the Veech group for a typical infinite surface in any given chamber of a stratum.\newline
\end{abstract}
\maketitle
\setcounter{tocdepth}{1}
\tableofcontents

\section{Introduction}

Moduli spaces of translation structures on surfaces are endowed with an action of $GL^{+}(2,\mathbb{R})$. The Veech group of a flat surface $X$ is the stabilizer of $X$ by this group action. Characterization of Veech groups of translation surfaces corresponding to Abelian differentials is an open problem \cite{EM,Sc,Zo}. We show in this paper that flat surfaces corresponding to meromorphic differentials are rigid enough to allow a full solution to this problem.\newline
Some results about the Veech group of a flat surface of infinite area are already known. In \cite{Va}, Valdez computes the Veech group of some families of surfaces of infinite area associated to irrational billards. In \cite{HS}, Hubert and Schmithüsen provide examples of square tiled surfaces with an infinite number of tiles and whose Veech groups are infinitely generated subgroups of $SL(2,\mathbb{R})$. In this paper, we consider only surfaces of finite type, that is surfaces of finite genus and whose singularities have a finite order.\newline

Meromorphic $1$-forms and quadratic differentials with poles of higher order induce flat structures of infinite area. However, they have finite complexity and we have some tools to study them \cite{Bo1, HKK, Ta}. Since we cannot normalize the area, the action of $GL^{+}(2,\mathbb{R})$ does not reduce to that of $SL(2,\mathbb{R})$.\newline

Let $X$ be a compact Riemann Surface and $q$ be a meromorphic quadratic differential with at least one pole of higher order, that is a pole of order no smaller than two. Such a pair $(X,q)$ is called a \textit{flat surface with poles of higher order}. When there is no ambiguity, it will be referred to as $X$. The meromorphic quadratic differential $q$ is allowed to be the global square of a meromorphic differential so our study includes meromorphic $1$-forms as a special case.\newline

Zeroes and poles of order one of the quadratic differential are conical singularities for the induced flat metric \cite{St}. We denote by $\mathcal{Q}(a_1,\dots,a_n,-b_1,\dots,-b_p)$ the stratum in moduli space of meromorphic quadratic differentials that corresponds to meromorphic differentials with conical singularites of degrees $a_1,\dots,a_n \in \lbrace-1\rbrace \cup \mathbb{N}^{\ast}$ and poles of degrees $b_1,\dots,b_p \geq 2$. We have $\sum \limits_{i=1}^n a_i - \sum \limits_{j=1}^p b_j = 4g-4$ where $g$ is the \textit{genus} of the underlying Riemann surface. Unless otherwise indicated, we will assume that $n>0$ and $p>0$. Besides, if $g=0$, we will assume that $n+p \geq 3$. In order to simplify our study, there are no marked points.\newline
In this paper, we consider the Veech group of a flat surface with poles of higher order $(X,q)$ as a subgroup of $GL^{+}(2,\mathbb{R})/ \lbrace\pm Id\rbrace$. When the quadratic differential is the global square of a meromorphic $1$-form $\omega$, what we get is a finite index subgroup of the Veech group of $(X,\omega)$ using the usual definition.\newline

\section{Statement of main results}

There is a fundamental geometric invariant of translation surfaces with poles and their quadratic counterparts. The convex hull $core(X)$ of the set of conical singularities of a surface $X$ is a kind of polygon, see \cite{HKK,Ta}, whose boundary is a finite chain of saddle connections. The core is invariant for the action of the Veech group. Therefore, symmetries of the surface are significantly restricted. For instance, in most cases, the $GL^{+}(2,\mathbb{R})$ action is not ergodic \cite{Bo1}.\newline
Besides, unlike strata of translation surfaces, strata of flat surfaces with poles of higher order have a walls-and-chambers structure defined by topological changes of the core. The cores of two surfaces that belong to the same chamber are topologically identical.\newline

Our main theorem is a complete classification of Veech groups of flat surfaces with poles of higher order. Valdez proved an analogous classification in the context of Abelian differentials on stable curves, see \cite{Va}.\newline

\begin{thm} The Veech group of a flat surface with poles of higher order belongs to one of the following three types of subgroups of $GL^{+}(2,\mathbb{R})/ \lbrace\pm Id\rbrace$:\newline
- Finite type: conjugated to a finite rotation group ;\newline
- Cyclic parabolic type: conjugated to $\left\lbrace \begin{pmatrix} 1&k \\ 0 & 1 \end{pmatrix} \mid k \in \mathbb{Z} \right\rbrace $ ;\newline
- Continuous type: conjugated to $\left\lbrace\begin{pmatrix} 1 & b \\ 0 & a \end{pmatrix} \mid a \in \mathbb{R}^{*}_{+}, b \in \mathbb{R} \right\rbrace$.
\end{thm}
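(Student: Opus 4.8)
The plan is to build everything on the one structural invariant emphasized before the statement: the compact core $\mathrm{core}(X)$, whose boundary is a finite cyclic chain of saddle connections and which is preserved setwise by every element of the Veech group $V(X)$. Since an affine automorphism acts on holonomy through its linear part and must carry boundary saddle connections to boundary saddle connections, $V(X)$ permutes the \emph{finite} set $S \subset \mathbb{R}^2$ of holonomy vectors of these boundary saddle connections. The whole argument then splits according to $\dim \mathrm{core}(X)$, equivalently according to whether $S$ spans $\mathbb{R}^2$ or lies on a line.

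First I would dispose of the case where $\mathrm{core}(X)$ is two-dimensional, so that $S$ spans $\mathbb{R}^2$. Here each $A \in V(X)$ permutes the finite spanning set $S$, so the natural map $V(X) \to \mathrm{Sym}(S)$ has trivial kernel (a linear map fixing a spanning set pointwise is the identity); hence $V(X)$ embeds in the finite group $\mathrm{Sym}(S)$ and is finite. A finite subgroup of $GL^{+}(2,\mathbb{R})$ preserves the averaged positive-definite form $\langle u,v\rangle_{V(X)} = \sum_{A \in V(X)} \langle Au, Av\rangle$, hence is conjugate into $SO(2)$; finite subgroups of $SO(2)$ are cyclic rotation groups, and passing to the quotient by $\{\pm Id\}$ changes nothing. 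This gives the finite type.

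Second comes the degenerate case, where $\mathrm{core}(X)$ is a segment (or, when $n=1$, a point). When it is a segment I conjugate so that it is horizontal; then $S$ is horizontal and every $A \in V(X)$ preserves the horizontal direction, so $A = \begin{pmatrix} \lambda & b \\ 0 & a \end{pmatrix}$ with $\lambda, a > 0$ after multiplying by $-Id$ if needed. Because $A$ maps the core to itself and multiplies horizontal lengths by $\lambda$, while the finitely many horizontal core saddle-connection lengths are permuted by multiplication by $\lambda > 0$, one forces $\lambda = 1$; thus $V(X)$ sits inside the continuous-type group $\left\{\begin{pmatrix}1 & b \\ 0 & a\end{pmatrix}\right\}$. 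It then remains to decide which subgroups of this solvable ($ax+b$) group occur. I would project to the vertical-scaling factor $p : A \mapsto a$. If $a \equiv 1$, then $V(X)$ consists of pure horizontal shears $\begin{pmatrix}1 & b \\ 0 & 1\end{pmatrix}$; these are constrained by the periodic (cylinder) structure in the horizontal direction, which quantizes $b$, so $V(X)$ is cyclic parabolic (or trivial, i.e. the order-one rotation group). If instead some $a \neq 1$ occurs, conjugating a shear $\begin{pmatrix}1 & b_1 \\ 0 & 1\end{pmatrix}$ by $\begin{pmatrix}1 & 0 \\ 0 & a\end{pmatrix}$ rescales $b_1$ to $b_1/a$, forcing the shear subgroup to be non-discrete, and combined with the scale-invariance of the relevant vertical end this should yield the full continuous-type group.

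The hard part will be precisely this degenerate case, namely ruling out the intermediate subgroups of the $ax+b$ group so that only the cyclic-parabolic and the full continuous group survive. The conjugation computation shows any nontrivial vertical scaling destroys discreteness of the shear parameters, but upgrading ``non-discrete'' to ``all of $\mathbb{R}$'' and ``all scalings occur'' needs genuine geometric input: I would show the stabilizer is a \emph{closed} subgroup using rigidity of the end structure, the point being that a pole of order at least three contributes half-plane sectors on which the entire one-parameter family $\begin{pmatrix}1 & b \\ 0 & a\end{pmatrix}$ acts by flat isometries, whereas a pole of order two contributes a cylinder end whose only shear symmetries are Dehn twists. Dually I must establish the discreteness of the shears in the all-cylinder case by identifying the horizontal direction as completely periodic. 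Finally I would treat the point-core case ($n=1$) separately, where $S$ is empty: the same reducibility and end analysis applies, and the affine symmetries of the polar ends again produce only a finite rotation group or the full continuous group, completing the trichotomy.
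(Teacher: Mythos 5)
Your first case (boundary holonomies spanning $\mathbb{R}^2$) is correct and is essentially the paper's argument: the Veech group injects into the permutations of a finite spanning set, hence is finite, hence conjugate to a rotation group. The trouble is everything after that. The equivalence you assert --- $core(X)$ two-dimensional if and only if $S$ spans $\mathbb{R}^2$ --- is false, and not as a technicality: the core can be a nondegenerate flat cylinder (so $\mathcal{I}\mathcal{C}(X)\neq\emptyset$) all of whose \emph{boundary} saddle connections are horizontal, so that $S$ lies on a line. This intermediate situation (nondegenerate core, parallel boundary) is exactly where the cyclic parabolic groups come from in the paper (Proposition 4.2, Figures 2 and 4), and it falls through your case division: it is not in your first case, and it is not the ``segment'' of your second case. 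The middle entry of the trichotomy is therefore unaccounted for.

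What your proposal is missing is the paper's key mechanism for separating the two parallel-boundary sub-cases, namely an \emph{area} argument. If $core(X)$ is degenerate, then every saddle connection of $X$ is horizontal and the surface is a union of infinite vertical strips over horizontal saddle connections; every matrix $\begin{pmatrix} 1 & b \\ 0 & a \end{pmatrix}$ then preserves $(X,q)$, so the Veech group is the \emph{full} continuous group --- no quantization of $b$ ever occurs here, contrary to your sub-case ``$a\equiv 1$''. If instead $core(X)$ is nondegenerate, it has finite positive area preserved by the Veech group, so the group lies in $SL(2,\mathbb{R})$; intersected with the triangular form already obtained, this forces $a=1$, and then discreteness (the holonomy vectors of \emph{all} saddle connections form a discrete set containing two non-parallel vectors, so an element of the group is determined by its action on that set) yields a cyclic or trivial group. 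Without the area step you cannot exclude intermediate subgroups of the $ax+b$ group: for instance the infinite cyclic group generated by $\begin{pmatrix} 1 & 0 \\ 0 & 2 \end{pmatrix}$ contains no nontrivial shear, so your conjugation trick says nothing about it. Worse, the geometric input you propose in its place --- that pole order decides the symmetries, order-two poles admitting only Dehn twists and higher-order poles the full family --- is false: an infinite half-cylinder glued along a horizontal boundary is invariant under \emph{every} $\begin{pmatrix} 1 & b \\ 0 & a \end{pmatrix}$ (only \emph{finite} cylinders quantize shears), and indeed the paper exhibits surfaces with only order-two poles having cyclic parabolic type (Figure 2) as well as continuous type (Figure 3). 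The type is governed by the core, not by the ends.
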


It should be noted that flat cones that belong to strata $\mathcal{Q}(a,-a-4)$ with $a \in \lbrace-1\rbrace \cup \mathbb{N}^{\ast}$ have the whole $GL^{+}(2,\mathbb{R})/ \lbrace\pm Id\rbrace$ as Veech group. We exclude these trivial cases by assuming $n+p \geq 3$.\newline

Theorem 2.1 is proved in section 4.\newline

For classical translation surfaces, the $GL^{+}(2,\mathbb{R})$-orbit is closed if and only if the Veech group is a lattice subgroup in $SL(2,\mathbb{R})$, see \cite{Ve}. There is an analogous result for translation surfaces with poles.

\begin{thm} The following statements are equivalent for a flat surface with poles of higher order $(X,q)$:\newline
(i) The Veech group of $(X,q)$ is of continuous type.\newline
(ii) All saddle connections of $(X,q)$ share the same direction.\newline
(iii) The $GL^{+}(2,\mathbb{R})$-orbit of $(X,q)$ is closed in the ambient stratum.
\end{thm}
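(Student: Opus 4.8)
The plan is to route everything through the geometry of $core(X)$, using the facts recalled above that $core(X)$ is a polygon whose boundary is a finite chain of saddle connections and that it is invariant under the Veech group. The pivotal observation is that condition (ii) is equivalent to $core(X)$ being degenerate, i.e. a segment: every saddle connection lies inside the convex set $core(X)$, so if the boundary saddle connections are all parallel the polygon collapses to a segment, and conversely if $core(X)$ is a segment then all saddle connections, being contained in it, are parallel. I would first record this equivalence and then prove (i) $\Leftrightarrow$ (ii) and (ii) $\Leftrightarrow$ (iii).

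For (i) $\Rightarrow$ (ii), after conjugating I may assume the Veech group equals the upper triangular group, so it contains the vertical dilations $\begin{pmatrix} 1 & 0 \\ 0 & a \end{pmatrix}$ for every $a>0$. These permute the set $\Lambda$ of holonomy vectors of saddle connections. Since every saddle connection is contained in the compact finite-area polygon $core(X)$, there are only finitely many of length at most $R$, so $\Lambda$ is discrete. If some saddle connection had holonomy $(x,y)$ with $y\neq 0$, then $(x,ay)\in\Lambda$ for all $a>0$ would form a non-discrete subset of $\Lambda$, a contradiction; hence every holonomy is horizontal and (ii) holds. For the converse (ii) $\Rightarrow$ (i), the core is a horizontal segment, and any affine automorphism must send this distinguished segment to itself preserving its length, forcing its linear part to fix the horizontal direction and to preserve horizontal length; thus the Veech group is contained in $\{\begin{pmatrix} 1 & b \\ 0 & a \end{pmatrix}\}$. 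To see that it equals this group (continuous type), I would exhibit vertical dilations as genuine affine automorphisms: when $core(X)$ is a horizontal segment the complement of the core is a union of infinite polar domains attached along horizontal slits, and each such domain (built from half-planes, or a half-infinite cylinder in the case of a double pole) is invariant under every map fixing the horizontal direction, the slit identifications being horizontal translations that commute with such maps. Exhibiting even a single non-parabolic element $\begin{pmatrix} 1 & 0 \\ 0 & a_0 \end{pmatrix}$ with $a_0\neq 1$ already suffices: by Theorem 2.1 a Veech group contained in the upper triangular group and containing a non-parabolic element can be neither a finite rotation group nor of cyclic parabolic type, hence is of continuous type.

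For (ii) $\Rightarrow$ (iii), note that (ii) is $GL^{+}(2,\mathbb{R})$-invariant (the action multiplies all holonomies by a fixed matrix, preserving parallelism) and that the upper triangular group is exactly the stabilizer of the horizontal unit vector. Writing $v$ for the nonzero holonomy of the core segment of $X$, the map $gX \mapsto g v$ identifies the orbit with $GL^{+}(2,\mathbb{R})/\mathrm{Stab}(v) \cong \mathbb{R}^2\setminus\{0\}$, and this identification is the restriction of the continuous period coordinate recording the core holonomy, hence a homeomorphism onto its image. If $g_nX \to Y$ inside the ambient stratum, then $Y$ again has a segment core, of holonomy $w=\lim g_n v$, and $w\neq 0$ since a vanishing core holonomy would merge conical singularities and leave the stratum. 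Properness of the orbit map then forces $g_n$ to converge in $GL^{+}(2,\mathbb{R})/\mathrm{Stab}(v)$, so $Y$ lies in the orbit and the orbit is closed.

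Finally, for $\neg$(ii) $\Rightarrow \neg$(iii) I would use the walls-and-chambers structure. If $core(X)$ is a genuine polygon it has boundary saddle connections in at least two directions, and I would construct a divergent sequence $g_n \in GL^{+}(2,\mathbb{R})$ that squashes the polygon along one direction while keeping the surface in a compact part of the stratum, so that $g_nX$ accumulates on a surface $Y$ whose core has strictly smaller combinatorial type. Since the topological type of $core(X)$ is a $GL^{+}(2,\mathbb{R})$-invariant, such a $Y$ cannot belong to the orbit of $X$, so the orbit is not closed. The main obstacle is precisely this last step: producing an \emph{accumulating} rather than merely divergent sequence, and controlling the degeneration of the core so that the limit stays inside the ambient stratum. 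This is where the finer structure of the walls-and-chambers decomposition, and the way saddle connections of $core(X)$ can be collapsed without leaving the stratum, have to be used.
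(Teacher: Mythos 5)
Your directions (i) $\Leftrightarrow$ (ii) and (ii) $\Rightarrow$ (iii) are essentially sound, and in places cleaner than the paper's own circular route (the paper proves (i) $\Rightarrow$ (iii) $\Rightarrow$ (ii) $\Rightarrow$ (i)): your discreteness argument for (i) $\Rightarrow$ (ii) is a nice direct proof, and your identification of the orbit with $GL^{+}(2,\mathbb{R})/\mathrm{Stab}(v)\cong\mathbb{R}^{2}\setminus\{0\}$ is the same mechanism as the paper's parametrization of the orbit by $\mathbb{C}^{*}$ acting by rotations and homotheties. The genuine gap is the implication $\neg$(ii) $\Rightarrow$ $\neg$(iii), which you yourself flag as "the main obstacle": producing a limit surface \emph{inside the stratum} that witnesses non-closedness. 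This is not a routine verification, and the paper closes it with a tool you did not invoke: the contraction flow (Definition 3.13 and Lemma 3.14, imported from Lemma 2.2 of \cite{Ta}). If $\theta$ is a direction which is not the direction of any saddle connection of $(X,q)$, Lemma 3.14 asserts that $C^{t}_{\alpha,\theta}(X,q)$ \emph{converges} in the ambient stratum to a surface $(X_{0},q_{0})$ all of whose saddle connections share the direction $\theta$. Since "all saddle connections parallel" is a $GL^{+}(2,\mathbb{R})$-invariant property and $X$ violates it, $(X_{0},q_{0})$ lies in the orbit closure but not in the orbit, so the orbit is not closed. Your sketch ("squash the polygon while keeping the surface in a compact part of the stratum") is exactly this idea, but the convergence statement is precisely the nontrivial content; without it (or an equivalent compactness argument) the proof of (iii) $\Rightarrow$ (ii) is incomplete.

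There is also a secondary error: your "pivotal observation" that (ii) is equivalent to $core(X)$ being degenerate fails in one direction. A degenerate core need not have all its saddle connections parallel: two nonparallel saddle connections can meet at a zero with both adjacent angles at least $\pi$, giving a locally convex, hence degenerate, core with edges in two directions. Indeed the paper's Theorem 2.4(iv) exhibits whole $GL^{+}(2,\mathbb{R})$-orbits of surfaces whose core is degenerate and consists of two perpendicular saddle connections; this is also why the proof of Theorem 2.1 treats the degenerate case only under the standing hypothesis that the boundary saddle connections are parallel. You rely on this false converse when you assert that $\neg$(ii) forces $core(X)$ to be "a genuine polygon": under $\neg$(ii) the core may still be degenerate, and your squashing picture does not apply to it as described, whereas the contraction flow argument is insensitive to whether the core is degenerate. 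The direction you actually need elsewhere, namely that (ii) forces the core to be degenerate (equivalently, a nondegenerate core carries saddle connections in two distinct directions), is true and is also asserted in the paper's proof of Theorem 2.1, so the rest of your argument survives this correction.
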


As we cannot normalize the area of the surface, we have to distinguish the action of $GL^{+}(2,\mathbb{R})$ and that of $SL(2,\mathbb{R})$.\newline
Surfaces whose $SL(2,\mathbb{R})$-orbit is closed are very specific among classical translation surfaces. They are called \textit{Veech surfaces}. On the contrary, it is common for a translation surface with poles to have a closed $SL(2,\mathbb{R})$-orbit.\newline

\begin{thm} The $SL(2,\mathbb{R})$-orbit of a flat surface with poles of higher order $(X,q)$ is closed in the ambient stratum in these two cases:\newline
(i) Not all saddle connections of $\partial\mathcal{C}(X)$ share the same direction.\newline
(ii) All saddle connections of $\partial\mathcal{C}(X)$ share the same direction and $core(X)$ decomposes into a (maybe empty) family of cylinders of commensurable moduli.\newline
Otherwise, the $SL(2,\mathbb{R})$-orbit of $(X,q)$ is not closed in the ambient stratum.
\end{thm}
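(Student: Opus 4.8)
The plan is to reduce the closedness of the $SL(2,\mathbb{R})$-orbit to the behaviour of the horocycle flow along the completely periodic directions of $(X,q)$, and then to an arithmetic dichotomy governed by the moduli of the cylinders that fill $core(X)$. First I would record that $core(X)$ is $GL^{+}(2,\mathbb{R})$-equivariant, that its area is preserved by $SL(2,\mathbb{R})$ but merely scaled by the central homotheties, so that the scaling degeneration responsible for the non-closedness of the $GL^{+}(2,\mathbb{R})$-orbit in Theorem 2.2 is invisible to $SL(2,\mathbb{R})$. Writing a diverging sequence $g_{n}\in SL(2,\mathbb{R})$ in its Cartan form $g_{n}=k_{n}a_{s_{n}}k_{n}'$ with $s_{n}\to\infty$ and $k_{n}\to k$, $k_{n}'\to k'$, I would observe that a purely diagonal geodesic $a_{s}Z$ can never converge to a point of the stratum: such a limit would be fixed by the whole one-parameter diagonal group, which would then sit inside the Veech group of the limit surface, contradicting Theorem 2.1. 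Hence if $g_{n}\cdot(X,q)$ converges while $g_{n}\to\infty$, the exponential expansion of $a_{s_{n}}$ must be absorbed by the shears produced when the small rotations $k_{n}'(k')^{-1}\to Id$ are conjugated by $a_{s_{n}}$; in other words every failure of closedness is detected by a horocyclic direction in which $Z=k'\cdot(X,q)$, hence $(X,q)$ up to rotation, is completely periodic.

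The second step identifies these completely periodic directions geometrically. I would show that $core(X)$ decomposes into cylinders in a direction $\theta$ if and only if every saddle connection of $\partial\mathcal{C}(X)$ is parallel to $\theta$: the boundaries of the $\theta$-cylinders lie along $\theta$, and conversely a convex core foliated by $\theta$-cylinders can only be bounded by $\theta$-parallel saddle connections. Consequently, in case (i) no direction is completely periodic, so by the first step no horocyclic escape is possible and the orbit is closed; in case (ii) and in the remaining case the common direction of $\partial\mathcal{C}(X)$ is the unique candidate completely periodic direction, and the whole analysis localizes there.

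The third step is the twist-torus dichotomy along that direction. Normalising it to be horizontal, with core cylinders $C_{1},\dots,C_{k}$ of moduli $m_{1},\dots,m_{k}$, I would compute that the horocycle $u_{t}$ acts on the space of Dehn-twist parameters of the $C_{i}$ as the linear flow on $(\mathbb{R}/\mathbb{Z})^{k}$ with velocity vector $(m_{1},\dots,m_{k})$, and that $SL(2,\mathbb{R})\cdot(X,q)$ meets this twist-torus exactly in the horocycle orbit, since any element of $SL(2,\mathbb{R})$ preserving the horizontal cylinder structure with unchanged moduli is forced to be unipotent. When the $m_{i}$ are pairwise commensurable the velocity is a rational direction, the flow is periodic and reproduces precisely the parabolic multi-twist of Theorem 2.1, so the horocycle orbit is already closed and the $SL(2,\mathbb{R})$-orbit is closed (case (ii), the empty family being the degenerate instance where $u_{t}$ fixes $(X,q)$). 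When the $m_{i}$ are incommensurable the flow is dense, its closure contains surfaces obtained by independent twists of the $C_{i}$ that lie in the ambient stratum but off the orbit, and the $SL(2,\mathbb{R})$-orbit is not closed, as claimed.

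The main obstacle is the first reduction: making rigorous that a diverging sequence can produce a new limit only through the horocyclic twisting mechanism, i.e. that no mixed Cartan behaviour survives the exponential expansion of $a_{s_{n}}$ except along the twist-tori. This requires careful bookkeeping of the stratum's period coordinates modulo the relevant mapping classes, so as to certify both that the twist-tori are the only compact sets invariant under that expansion and that the ``independently twisted'' limit surfaces in the incommensurable case are genuinely distinct from every point of the orbit.
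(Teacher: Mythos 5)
Your proposal breaks down at the equivalence you assert in your second step, and the failure is not cosmetic: it makes the argument return the wrong answer on part of the ``otherwise'' clause of the theorem. It is true that if $core(X)$ decomposes into cylinders in direction $\theta$ then every saddle connection of $\partial\mathcal{C}(X)$ is parallel to $\theta$, but the converse is false. By Theorem 3.11, when all saddle connections of $\partial\mathcal{C}(X)$ are horizontal, the horizontal foliation of $\mathcal{I}\mathcal{C}(X)$ may contain \emph{minimal components}, whose dynamics is a nontrivial interval exchange; such a core admits no cylinder decomposition in any direction whatsoever (a $\theta$-cylinder decomposition would force $\partial\mathcal{C}(X)$ to be $\theta$-parallel, hence $\theta$ horizontal, contradicting minimality). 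For such a surface there is no completely periodic direction, so your first-step dichotomy --- closedness can only fail through horocyclic escape along a completely periodic direction --- would certify that the $SL(2,\mathbb{R})$-orbit is closed, whereas the theorem asserts it is not. Your twist-torus analysis never meets this case because it presupposes cylinders $C_{1},\dots,C_{k}$. This also shows that your first step is not merely the ``main obstacle'' you flag at the end, but is false as stated: horocyclic shearing along a direction carrying a minimal component does produce a non-closed orbit.

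The paper's treatment of precisely this case shows what is needed instead. Using the Iwasawa decomposition $M_{n}=K_{n}A_{n}N_{n}$, the finitely many horizontal boundary saddle connections force the $K_{n}$ and $A_{n}$ parts of any sequence with $M_{n}.(X,q)$ convergent to subconverge in $SO(2,\mathbb{R})$ and in the diagonal group respectively (no boundary saddle connection may blow up or shrink), reducing closedness of the $SL(2,\mathbb{R})$-orbit to closedness of the $H$-orbit, where $H$ is the upper unipotent group. Then the interior of the core is glued along its horizontal boundary into a compact half-translation surface $(X_{c},q_{c})$ with marked horizontal saddle connections: since no saddle connection shrinks under $H$, the $H$-orbit of $(X_{c},q_{c})$ stays in a compact subset of its stratum, so if it were closed it would be compact, hence periodic; but Proposition 4.2 rules out a parabolic element whenever there is a minimal component or two incommensurable cylinders, a contradiction. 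This single argument covers both subcases of ``otherwise'' uniformly, with no need to separate dense twist-tori from minimal dynamics. Two smaller points: your claim that a limit of $a_{s}Z$ is fixed by the whole diagonal group is valid for convergence along the full flow but not along an arbitrary sequence $s_{n}\to\infty$, which is all the Cartan decomposition hands you; and case (i) needs none of this machinery --- the holonomy vectors of two nonparallel saddle connections of $\partial\mathcal{C}(X)$ form an $\mathbb{R}$-basis of $\mathbb{C}$, their images under $A_{n}$ converge because $\partial\mathcal{C}$ of the limit surface has finitely many saddle connections, area-preservation keeps the limit pair a basis, so $A_{n}$ converges in $SL(2,\mathbb{R})$ and the orbit closure equals the orbit.
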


Theorems 2.2 and 2.3 are proved in section 5.\newline

Following definitions of \cite{Mo}, we say that a flat surface $(X,q)$ is generic in its stratum (or its chamber) if it lies outside a countable union of real codimension one submanifolds. Two saddle connections are said to be parallel when their relative homology classes are not linearly independant over $\mathbb{R}$.\newline

Generic Veech groups of flat surfaces of finite area are already known, see \cite{Mo}. They are trivial as long as $g\geq2$. We get similar results for flat surfaces with poles of higher order. It should be recalled that the dimension of a stratum $\mathcal{Q}$ is $2g+n+p-2$, see \cite{BCGGM1}.\newline

\begin{thm}
In strata of dimension one, the Veech group of every surface is of continuous type.\newline
In strata of dimension at least three, the Veech group of a generic surface is trivial.\newline
In strata of dimension two, we distinguish several cases:\newline
(i) The Veech group of every surface is of cyclic parabolic type in chambers where the core is a cylinder in the following strata:\newline
- $\mathcal{Q}(a,-a)$ with $a\geq2$,\newline
- $\mathcal{Q}(a,-1^{2},-2-a)$ with $a\geq1$,\newline
- $\mathcal{Q}(a,b,-a-2,-b-2)$ with $a,b\geq1$.\newline
(ii) In chambers where the core is a triangle, the Veech group of every surface is isomorphic to $\mathbb{Z}/3\mathbb{Z}$. Such chambers exist in the following strata:\newline
- $\mathcal{Q}(3b-4,-b,-b,-b)$ with $b\geq2$,\newline
- $\mathcal{Q}(a,a,a,-3a-4)$ with $a\geq 1$,\newline
(iii) In the chamber of each stratum $\mathcal{Q}(3a,-3a)$ with $a\geq1$ where the core is a triangle and the three elementary loops have the same topological index,
the Veech group of every surface is isomorphic to $\mathbb{Z}/3\mathbb{Z}$.\newline
(iv) The Veech group of every surface is isomorphic to $\mathbb{Z}/2\mathbb{Z}$ in the the open $GL^{+}(2,\mathbb{R})$-orbits of the following flat surfaces:\newline
- Flat surface whose core is degenerated and where all angles are congruent angles in $\mathcal{Q}(4k,-4k)$ with $k\geq1$,\newline
- Flat surface whose core is degenerated and whose four angles are $(2+a)\pi$, $(2+a)\pi$, $\dfrac{(2k+1)\pi}{2}$ and $\dfrac{(2k+1)\pi}{2}$ in strata $\mathcal{Q}(2k-1,a,a,-2a-2k-3)$ with $k\geq1$ and $a \in \lbrace-1\rbrace \cup \mathbb{N}^{\ast}$,\newline
- Flat surface whose core is degenerated and whose four angles are $(b-1)\pi$, $(b-1)\pi$, $\dfrac{(2k+1)\pi}{2}$ and $\dfrac{(2k+1)\pi}{2}$ in $\mathcal{Q}(2b+2k-3,-b,-b,-2k-1)$ with $b\geq2$ and $k\geq1$,\newline
- Flat surface whose core is formed by two saddle connections between two distinct conical singularities and whose four angles are $\dfrac{(1+2k)\pi}{2}$, $\dfrac{(1+2k)\pi}{2}$, $\dfrac{(1+2l)\pi}{2}$ and $\dfrac{(1+2l)\pi}{2}$ in $\mathcal{Q}(k+l-1,k+l-1,-2k-1,-2l-1)$ with $k,l \geq 1$,\newline
- Flat surface whose core is formed by two saddle connections between two distinct conical singularities and whose four angles are $\dfrac{(1+2k)\pi}{2}$, $\dfrac{(1+2k)\pi}{2}$, $\dfrac{(1+2l)\pi}{2}$ and $\dfrac{(1+2l)\pi}{2}$ in $\mathcal{Q}(2k-1,2l-1,-k-l-1,-k-l-1)$ with $k,l \geq 1$.\newline
(v) Outside these chambers or orbits, the Veech group of a generic surface is trivial.
\end{thm}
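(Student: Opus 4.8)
The plan is to reduce everything to the action of a Veech group element on the core $\mathcal{C}(X)$, using the classification of Theorem 2.1 and the characterization of the continuous type in Theorem 2.2. Recall that a stratum $\mathcal{Q}$ has complex dimension $2g+n+p-2$, and that $GL^{+}(2,\mathbb{R})/\{\pm Id\}$ is $4$-real-dimensional, so the $GL^{+}(2,\mathbb{R})$-orbit of $(X,q)$ has real dimension $4-\dim_{\mathbb{R}}V$, where $V$ is the Veech group: it is $4$ when $V$ is trivial, finite or cyclic parabolic, and $2$ when $V$ is of continuous type. This dimension count is the backbone of the whole argument.

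First I would dispose of the extreme dimensions. In a stratum of complex dimension one, the single period coordinate forces every saddle connection holonomy to be a real multiple of a fixed vector, so all saddle connections are parallel; by Theorem 2.2 the Veech group is then of continuous type. Equivalently, a continuous-type orbit already has the full real dimension $2$ of such a stratum, so the orbit is open and, being also closed by Theorem 2.2, exhausts the connected stratum. In dimension at least three, a nontrivial Veech element is, by Theorem 2.1, either elliptic (a finite rotation), parabolic, or forces the continuous type, and each possibility confines $(X,q)$ to a proper submanifold: the continuous type forces all saddle connections parallel, hence a closed orbit of real dimension $2$, of positive codimension once the stratum has real dimension at least $6$; a rotation of given finite order forces an affine automorphism of $(X,q)$ and hence a coincidence condition on periods; a parabolic element forces a complete cylinder decomposition of $core(X)$ in one direction with commensurable moduli (cf. Theorem 2.3), again a positive-codimension condition. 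As there are only countably many admissible rotation orders and cylinder combinatorics, the locus of surfaces with nontrivial Veech group is a countable union of positive-codimension submanifolds, so the generic surface has trivial Veech group.

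The core of the argument is dimension two, where I would run a finite case analysis over chambers indexed by the topological type of $core(X)$; here $GL^{+}(2,\mathbb{R})$ acts with open orbits exactly when the Veech group is trivial, finite or cyclic parabolic. When $core(X)$ is a cylinder, the cylinder data (its two holonomy vectors) identify the chamber with a single $GL^{+}(2,\mathbb{R})$-orbit whose stabilizer is exactly the group of Dehn twists, i.e.\ cyclic parabolic; I would check that the listed strata $\mathcal{Q}(a,-a)$, $\mathcal{Q}(a,-1^{2},-2-a)$ and $\mathcal{Q}(a,b,-a-2,-b-2)$ are precisely those admitting such a chamber, and that no rotation survives (the cylinder's two boundaries carry incompatible angle data, ruling out elliptic elements) while not all saddle connections are parallel (ruling out the continuous type). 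When $core(X)$ is a triangle, its three edges are saddle connections whose shape forms a single $GL^{+}(2,\mathbb{R})$-orbit, and a Veech element must permute the vertices preserving cone angles, so a nontrivial symmetry exists exactly when the three corners carry equal angles and equal topological index, which happens in $\mathcal{Q}(3b-4,-b,-b,-b)$, $\mathcal{Q}(a,a,a,-3a-4)$ and the symmetric-index chamber of $\mathcal{Q}(3a,-3a)$; the surviving symmetry is then the cyclic permutation of order three, giving $\mathbb{Z}/3\mathbb{Z}$. For the degenerate cores of case (iv), where $core(X)$ collapses to a segment built from two saddle connections, the only candidate nontrivial symmetry is the rotation by $\pi$ exchanging the two sides, realized precisely when the four cone angles coincide in the two prescribed equal pairs, giving $\mathbb{Z}/2\mathbb{Z}$. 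In every remaining chamber the generic core (a scalene triangle, a generic quadrilateral, or an asymmetric degenerate core) admits no symmetry respecting the angle data, so the generic Veech group is trivial, establishing case (v).

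The main obstacle I anticipate is twofold, and both parts live in dimension two. First, the enumeration must be shown to be exhaustive: I must list all strata with $2g+n+p=4$, then for each determine which topological types of $core(X)$ actually occur, which requires the combinatorial description of admissible cores together with their boundary angle data. Second, and more delicate, for the symmetric chambers I must prove that the Veech group is exactly the claimed finite or cyclic group for \emph{every} surface in the chamber, not merely generically: this means producing the symmetry explicitly from the angle equalities and, conversely, ruling out any larger symmetry by showing that a bigger Veech group would force additional coincidences among the finitely many boundary saddle connections, incompatible with the chamber's fixed combinatorics. The rotation/parabolic dichotomy — deciding when equal angles genuinely extend to a global affine automorphism rather than a mere isometry of the core boundary — is where the geometric bookkeeping will be heaviest.
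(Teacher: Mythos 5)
Your plan reproduces the paper's skeleton faithfully: dimension one via all saddle connections being parallel, dimension at least three by genericity of periods, and a chamber-by-chamber analysis of the core in dimension two. But the two ``anticipated obstacles'' you defer are not side verifications --- they are the proof. The tool that makes the paper's dimension-two analysis finite and tractable is the count of noncrossing saddle connections (Theorem 3.7): in a stratum of dimension two ($2g+n+p=4$) it gives $|A|=6-\beta$ and $|A|=2+t$, whence $\beta\le 3$. This is what shows (a) that the boundary of the core has at most three saddle connections, so the only candidate cores for a nontrivial \emph{finite} Veech group are an ideal triangle or a degenerate core made of two saddle connections --- in particular your ``generic quadrilateral'' core cannot occur --- and (b) that for cyclic parabolic type the core must be exactly one cylinder with no further saddle connections (two cylinders would require more than two complex parameters of deformation, and any extra saddle connection would force $\beta\ge 3$). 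Without this bound your ``finite case analysis over chambers'' has no a priori finiteness. Likewise, the stratum lists and angle conditions in (i)--(iv) are the content of the statement, not checks to be done later: they come from forcing the order-three symmetry to act faithfully on the domains of poles or on the vertices (yielding $\mathcal{Q}(3b-4,-b,-b,-b)$ and $\mathcal{Q}(a,a,a,-3a-4)$, and excluding the $g=0$, $n=p=2$ strata where one of the three sides is a closed saddle connection), from the topological-index computation on the three elementary loops (yielding $\mathcal{Q}(3k,-3k)$ and only those), and from the angle bookkeeping, stratum type by stratum type, for the degenerate cores. None of this is carried out in your proposal, so cases (i)--(v) remain unproved.

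There is also one concrete error: in case (iv) you identify the candidate symmetry as ``the rotation by $\pi$ exchanging the two sides.'' Rotation by $\pi$ is $-Id$, which is trivial in $GL^{+}(2,\mathbb{R})/\lbrace\pm Id\rbrace$ and preserves every direction, so it exchanges nothing. The nontrivial element here is induced by an element conjugate to the rotation of order four: a matrix $M$ exchanging the two distinct directions of the saddle connections has $M^{2}$ fixing both holonomy vectors up to sign, and positivity of the determinant forces $M^{2}=\pm Id$; since $M\neq\pm Id$ and $M^{2}=Id$ with $\det M>0$ is impossible for such an $M$, one gets $M^{2}=-Id$. It is precisely this order-four constraint that produces the angle conditions $\frac{(2k+1)\pi}{2}$ appearing throughout the lists in (iv) --- the very conditions your sketch leaves unspecified.
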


Theorem 2.4 is proved in section 6.\newline

The structure of the paper is the following: \newline
- In Section 3, we recall the background and tools useful to study flat surfaces of infinite area: flat metric, saddle connections, the moduli space, the core of a surface and its associated wall-and-chambers structure, the contraction flow.\newline
- In Section 4, we prove our theorem of classification.\newline
- In Section 5, we provide complete criteria of closedness of orbits of $GL^{+}(2,\mathbb{R})$ and $SL(2,\mathbb{R})$.\newline
- In Section 6, we characterize loci where the Veech group of a generic surface is nontrivial.\newline

\textit{Acknowledgements.} I thank my doctoral advisor Anton Zorich for motivating my work on this paper and many interesting discussions. I am grateful to Corentin Boissy, Quentin Gendron, Ben-Michael Kohli and the reviewer for their valuable remarks.
This work is supported by the ERC Project "Quasiperiodic" of Artur Avila.\newline

\section{Definitions and tools}

\subsection{Flat structures}

Let $X$ be a compact Riemann surface and let $q$ be a meromorphic quadratic differential.
$\Lambda$ and $\Delta$ respectively are the set of conical singularities and poles of higher order of $q$. Outside $\Lambda$ and $\Delta$, $q$ is locally the square of a holomorphic differential $\omega$. Integration of $\omega$ in a neighborhood of $z_{0}$ gives local coordinates whose transition maps are of the type $z \mapsto \pm z+c$. The pair $(X,q)$ seen as a smooth surface with such a translation atlas is called a \textit{flat surface with poles of higher order}.\newline

In the case of quadratic differentials that are the global square of a $1$-form, the holonomy of the metric is trivial. Otherwise, the holonomy of the metric is $\mathbb{Z}/2\mathbb{Z}$.\newline

In a neighborhood of an element of $\Lambda$, the metric induced by $q$ admits a conical singularity of angle $\left(k+2 \right)\pi$ where $k$ is the degree of the corresponding zero of $q$. In particular, poles of order one are conical singularities of angle $\pi$.\newline

See Strebel \cite{St} for a complete description of the local geometry around poles of higher order.\newline

\begin{defn} A saddle connection is a geodesic segment joining two conical singularities of the flat surface such that all interior points are not conical singularities.
\end{defn}

In \cite{Ta}, we proved a bound on the number of saddle connections of a flat surface with poles of higher order.

\begin{thm}
Let $|SC|$ be the number of saddle connections of a flat surface with poles of higher order $(X,q)$ of genus $g$ belonging to $\mathcal{Q}(a_1,\dots,a_n,-b_1,\dots,-b_p)$, then we have $|SC| \geq 2g-2+n+p$.
\end{thm}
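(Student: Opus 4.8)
The plan is to show that the holonomy vectors (equivalently, the relative homology classes) of the saddle connections must span a space of dimension $2g-2+n+p$, so that there cannot be fewer than this many of them. The starting point is that $2g-2+n+p$ is exactly the complex dimension of the stratum $\mathcal{Q}(a_1,\dots,a_n,-b_1,\dots,-b_p)$, i.e. the number of independent local period coordinates. A saddle connection $\gamma$ joins two conical singularities and avoids the poles of higher order, so it determines a class $[\gamma] \in H_1(X \setminus \Delta, \Lambda; \mathbb{Z})$, and its holonomy is the period of this class. Hence it suffices to prove two things: that the rank of this relative homology group equals $2g-2+n+p$, and that the classes of saddle connections generate it.

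First I would compute the rank. The long exact sequence of the pair $(X \setminus \Delta, \Lambda)$, together with $H_1(X\setminus\Delta;\mathbb{Z}) \cong \mathbb{Z}^{2g+p-1}$ and the fact that the augmentation $H_0(\Lambda)=\mathbb{Z}^n \to H_0(X\setminus\Delta)=\mathbb{Z}$ has kernel of rank $n-1$, yields a short exact sequence $0 \to \mathbb{Z}^{2g+p-1} \to H_1(X\setminus\Delta,\Lambda;\mathbb{Z}) \to \mathbb{Z}^{n-1} \to 0$. The rank is therefore $(2g+p-1)+(n-1) = 2g-2+n+p$, matching the stratum dimension. In the case where $q$ is not a global square, I would run the same computation on the anti-invariant part of the relative homology of the orientation double cover on which $q$ becomes the square of an Abelian differential, recovering the dimension quoted from \cite{BCGGM1}.

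Next I would produce enough saddle connections to generate. Here I would use the structure of the core: every conical singularity lies in $core(X)$, and each polar domain of $X \setminus core(X)$ deformation retracts onto its boundary in $\partial core(X)$, so the inclusion $core(X) \hookrightarrow X \setminus \Delta$ is a homotopy equivalence respecting $\Lambda$ and $H_1(core(X), \Lambda;\mathbb{Z}) \cong H_1(X\setminus\Delta, \Lambda;\mathbb{Z})$. Since $core(X)$ is a compact flat surface of finite area with geodesic boundary made of saddle connections, a maximal family of pairwise non-crossing saddle connections triangulates it with vertex set $\Lambda$. The edges of any triangulation of a CW pair generate the first relative homology, so the saddle connections of this triangulation already generate $H_1(core(X), \Lambda; \mathbb{Z})$; hence $|SC|$ is at least the rank computed above, namely $2g-2+n+p$.

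The main obstacle is the geometric input of the third step: verifying that the complement of the core deformation retracts onto $\partial core(X)$ (so that the core carries the entire relative homology) and that saddle connections genuinely triangulate the core. Both rely on the local description of the flat metric near poles of higher order and on the convexity of the core established in \cite{HKK, Ta}; once these are granted, the homological bookkeeping is routine. A secondary point requiring care is the passage to the orientation double cover when the holonomy is nontrivial, where one must check that saddle connections of $(X,q)$ lift to a spanning set of the anti-invariant relative homology rather than undercounting it.
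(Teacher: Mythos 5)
Your proposal is sound, but note that the paper itself contains no proof of this statement: it is imported from \cite{Ta}, and the ingredient the paper actually records is Theorem 3.7, the Euler-characteristic count $|A| = 6g-6+3n+3p-\beta = 2g-2+n+p+t$ for a maximal family of noncrossing saddle connections, from which the bound follows in one line since $t \geq 0$ and every member of such a family is a saddle connection. Your route replaces that cell count by a homological spanning argument: saddle connection classes span $H_1(X\setminus\Delta,\Lambda;\mathbb{Z})$ (respectively the anti-invariant part $H_-$ of the double cover when $q$ is not a global square), a group of rank $2g+n+p-2$, so there are at least that many saddle connections. The two arguments rest on exactly the same geometric inputs, which the paper quotes as Lemma 3.4 (the core is a deformation retract of $X\setminus\Delta$, from \cite{HKK}) and Theorem 3.7 (maximal noncrossing families triangulate the core, from \cite{Ta}); given these, your long-exact-sequence computation is the same Euler-characteristic bookkeeping in homological dress. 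What your version buys is a genuinely stronger conclusion: the periods of saddle connections span the period coordinates of the stratum, which is the statement one actually wants for the genericity arguments of Section 6. What it costs is the detour through the canonical double cover, i.e.\ the point you flag: one resolves it by lifting the deformation retraction and the triangulation to $(\widetilde{X},\omega)$, so the lifted edges span $H_1(\widetilde{X}\setminus\Delta',\Lambda';\mathbb{Q})$, and then applying the projection $x \mapsto \tfrac{1}{2}(x-\tau_\ast x)$, which sends the two lifts of a saddle connection $\gamma$ to multiples of $[\gamma]$ (using $\tau_\ast[\gamma_1]=[\gamma_2]$) and carries a spanning set onto a spanning set of $H_-$. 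One last case to mention explicitly: when $core(X)$ is degenerated there is no triangulation, but then $core(X)=\partial\mathcal{C}(X)$ is a $1$-complex with all vertices in $\Lambda$, so its edges generate the relative $H_1$ for trivial cellular reasons and the argument closes there as well.
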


\subsection{Moduli space}

If $(X,q)$ and $(X',q')$ are flat surfaces such that there is a biholomorphism $f$ from $X$ to $X'$ such that $q$ is the pullback of $q'$, then $f$ is an isometry for the flat metrics defined by $q$ and $q'$.\newline

We define the moduli space of meromorphic quadratic differentials as the space of equivalence classes of pairs $(X,q)$ up to biholomorphism preserving the quadratic form.\newline

We denote by $\mathcal{Q}(a_1,\dots,a_n,-b_1,\dots,-b_p)$ the \textit{stratum} that corresponds to meromorphic quadratic forms with singularities of degrees $a_1,\dots,a_n$ and $-b_1,\dots,-b_p$. We have $a_1,\dots,a_n \in \lbrace-1\rbrace \cup \mathbb{N}^{\ast}$. They are the poles of order one and the zeroes of arbitrary order. They define conical singularities. Singularities are referred to as poles of higher order when they are poles of order no smaller than two. That is why we have $b_1,\dots,b_p \geq 2$.

\subsection{Canonical double covering}

Outside the set of singularities, every quadratic differential is locally the square of a $1$-form. The canonical double covering assigns to every pair $(X,q)$ of a Riemann surface and a quadratic differential a flat surface such that the quadratic differential is globally the square of a $1$-form, see \cite{BCGGM1}. 
For quadratic surfaces that are already the global square of a $1$-form $\omega$, instead of constructing a canonical double covering, we simply choose a square root.\newline

Let $(X,q)$ be a flat surface of $\mathcal{Q}(2c_{1},\dots,2c_{s},2d_{1}+1,\dots,2d_{t}+1)$ where $c_{1},\dots,c_{s}\in\mathbb{Z}^{\ast}$ and $d_{1},\dots,d_{t}\in\mathbb{Z}$. We denote by $P_{i} \in X$ the point corresponding to the singularity of order $2d_{i}+1$ and by $(\widetilde{X},\omega)$ the canonical double cover of $(X,q)$. Translation surface $(\widetilde{X},\omega)$ belongs to $\mathcal{H}(c_{1},\dots,c_{s},c_{1},\dots,c_{s},2d_{1}+2,\dots,2d_{t}+2)$.\newline

Quadratic differentials that are the global square of a $1$-form belong to strata of the form $\mathcal{Q}(2c_{1},\dots,2c_{s})$ where all singularities are of even order. Their square roots belong to stratum $\mathcal{H}(c_{1},\dots,c_{s})$.

\subsection{Period coordinates}

The double canonical covering $(\widetilde{X},\omega)$ of $(X,q)$ admits an involution $\tau$ that induces another involution $\tau^{\ast}$ on $H_{1}(X\setminus\Delta,\Lambda)$ where $\Delta$ is the set of poles of higher order and $\Lambda$ is the set of conical singularities.\newline
Thus $H_{1}(X\setminus\Delta,\Lambda)$ decomposes into an invariant subspace $H_{+}$ and an anti-invariant space $H_{-}$. Strata are complex-analytic orbifolds with local coordinates given by the period map of $H_{-}$, see Theorem 2.1 in \cite{BCGGM1}. In particular, its complex dimension is $2g+n+p-2$.\newline

We can associate homology classes to saddle connections. In the case of quadratic differentials, the way to do that is somewhat subtle, see \cite{MZ} for details. We consider a flat surface with poles $(X,q)$ and its double canonical covering $(\widetilde{X},\omega)$.

$H_{1}(\widetilde{X}\setminus\Delta',\Lambda')$ is the first relative homology group of $(\widetilde{X},\omega)$ where $\Delta'$ and $\Lambda'$ respectively are the preimages by $\pi$ of the conical singularities and poles of higher order associated to $q$. An involution $\tau$ associated to the covering acts on $H_{1}(\widetilde{X}\setminus\Delta',\Lambda')$.

We denote by $\gamma_{1}$ and $\gamma_{2}$ the two preimages of an oriented saddle connection $\gamma$. If the relative cycle $[\gamma_{1}]\in H_{1}(\widetilde{X}\setminus\Delta',\Lambda')$ satisfies $[\gamma_{1}]=- [\gamma_{2}]$, then we define $[\gamma]=[\gamma_{1}]$. Conversely, if $[\gamma_{1}]=[\gamma_{2}]$, then we define $[\gamma]=[\gamma_{1}]-[\gamma_{2}]$.\newline
Two saddle connections are said to be parallel when their relative homology classes are linearly dependant over $\mathbb{R}$.\newline
The holonomy vector of a saddle connection is essentially determined by the period of its relative homology class. Its direction modulo $\pi$ is that of the period and its length is one-half of the modulus of the period.

\subsection{Index of a loop}

The topological index of a loop is particularly easy to handle in flat geometry.\newline

\begin{defn}
Let $\gamma$ be a simple closed curve in a flat surface with (or without) poles of higher order. $\gamma$ is parametrized by arc-length and passes only through regular points.\newline
We consider the lifting $\eta$ of $\gamma$ by the canonical double covering. Then $\eta'(t)=e^{i\theta(t)}$.\newline
We have $\dfrac{1}{2\pi}\int_{0}^{T} \theta'(t)dt \in \dfrac{1}{2}\mathbb{Z}$ because of the holonomy of the flat surface. This number is the topological index $ind(\gamma)$ of our loop $\gamma$.\newline
\end{defn}

In particular, the topological index of a simple closed curve around a singularity of order $k$ is $1+\dfrac{k}{2}$.

\subsection{Core of a flat surface with poles of higher order}

The core of a flat surface of infinite area was introduced in \cite{HKK} and systematically used in \cite{Ta}. It is the convex hull of the conical singularities of a flat surface. Since all saddle connections belong to it, the core encompasses most of the qualitative (see walls-and chambers structure in subsection 3.7) and quantitative (periods of the homology) information about the geometry of the flat surface.

\begin{defn} A subset $E$ of a flat surface is \textit{convex} if and only if every element of any geodesic segment between two points of $E$ belongs to $E$.\newline
The core of a flat surface with poles of higher order $(X,q)$ is the convex hull $core(X)$ of its conical singularities $\Lambda$.\newline
$\mathcal{I}\mathcal{C}(X)$ is the interior of $core(X)$ in $X$ and $\partial\mathcal{C}(X) = core(X)\ \backslash\ \mathcal{I}\mathcal{C}(X)$ is its boundary.\newline
The \textit{core} is said to be degenerated when $\mathcal{I}\mathcal{C}(X)=\emptyset$ that is when $core(X)$ is just graph $\partial\mathcal{C}(X)$.
\end{defn}

\begin{lem} Let $X$ be a flat surface with poles of higher order $\mathcal{Q}(a_1,\dots,a_n,-b_1,\dots,-b_p)$, then $X \setminus core(X)$ has $p$ connected components. Each of them is a topological disk. We refer to these connected components as \textit{domains of poles}.
\end{lem}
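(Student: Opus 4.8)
The plan is to produce one complementary piece per pole and to recognize each as a disk. Set $\Omega := X \setminus core(X)$. Since every conical singularity lies in $\Lambda \subseteq core(X)$, the open set $\Omega$ is flat and its only metric singularities are the poles of higher order; moreover, because $\partial\mathcal{C}(X)$ is a finite chain of saddle connections (\cite{HKK,Ta}), the frontier of each connected component of $\Omega$ is contained in $\partial\mathcal{C}(X) \subseteq core(X)$. I would first rule out bounded components, then exhibit a disk around each pole, and finally identify components with poles by a connectedness argument.

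First I would show that every component of $\Omega$ contains a pole. Suppose $C$ is a component with no pole; then $\overline{C}$ is compact, since it avoids the infinite-area ends, and the interior of $C$ is flat and free of cone points. Choose $x \in C$ and a straight geodesic through $x$ in a generic direction; since $C$ is bounded and cone-free in its interior, the geodesic remains straight and meets $\partial C$ on both sides of $x$, say at $p$ and $q$. The arc from $p$ to $q$ through $x$ is a geodesic segment (a locally geodesic arc, as saddle connections already are) with endpoints $p,q \in \partial C \subseteq core(X)$, so convexity of $core(X)$ forces $x \in core(X)$, contradicting $x \in \Omega$. Conversely, Strebel's local description of the metric near a pole of order $b_j \geq 2$ (\cite{St}) realizes a punctured neighborhood of $P_j$ as finitely many Euclidean half-plane sectors glued around $P_j$; adjoining the point $P_j$, this is an embedded topological disk containing no conical singularity. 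As $core(X)$ is compact (\cite{Ta}) while points deep in such a neighborhood lie arbitrarily far from $\Lambda$, the neighborhood is disjoint from $core(X)$, so $P_j \in \Omega$.

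Finally I would match components with poles. For each pole attach its polar domain $D_j$, the set of points reached from $P_j$ by a straight geodesic ray meeting $core(X)$ nowhere; by the normal form above, $D_j$ is an embedded open disk whose only pole is $P_j$ and whose frontier, swept out by the far endpoints of these rays, is contained in $\partial\mathcal{C}(X) \subseteq core(X)$. Thus $D_j$ is simultaneously open and closed in $\Omega$, hence a union of components, and being connected it is a single component. Together with the previous step, where each component meets some pole and, near that pole, coincides with its polar domain, this yields a bijection between the $p$ poles and the components of $\Omega$, distinct poles giving disjoint domains. Hence $\Omega$ has $p$ components, each a topological disk. The main obstacle is precisely this last identification: I must ensure that a component cannot sprawl beyond a single pole-end, carrying a second pole or extra topology. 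Resolving it requires combining the convexity of $core(X)$, which by the argument above forbids bounded pockets, with Strebel's local structure, which makes each pole-end a disk whose frontier lands in the core, so that the frontier of $D_j$ stays entirely inside $core(X)$ and $D_j$ is forced to be clopen in $\Omega$.
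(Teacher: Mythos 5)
Your outline isolates the right ingredients (convexity of $core(X)$, Strebel's normal form at the poles), but two of your central steps are asserted at exactly the points where the real work lies, and as written they do not hold up. First, in the argument that a component $C$ without poles cannot exist, the sentence ``since $C$ is bounded and cone-free in its interior, the geodesic remains straight and meets $\partial C$ on both sides of $x$'' is not a valid inference: boundedness and absence of cone points do not force a straight trajectory to reach the frontier. The trajectory through $x$ could a priori be a closed geodesic or a recurrent trajectory that stays in $C$ forever (closed leaves of a flat cylinder are the model case), and saying ``generic direction'' does not repair this, since you give no argument that the set of bad directions is small. The correct fix is available in the paper itself: by Theorem 3.11, a trajectory that never meets a singularity and does not converge to a pole is either closed or recurrent, and the corresponding finite-volume invariant components (cylinders and minimal components) lie in $core(X)$, which contradicts $x \notin core(X)$; the remaining case of a closed leaf of an infinite-volume cylinder forces a pole of higher order into $C$, again a contradiction. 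None of this appears in your proposal, and without it the first step is a genuine gap.

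Second, your identification of components with poles rests on the claim that the polar domain $D_j$ is clopen in $\Omega$ because its frontier is ``swept out by the far endpoints of these rays'' and hence lies in $\partial\mathcal{C}(X)$. This is precisely the delicate point, and it is asserted, not proved: a limit $y$ of points of $D_j$ can fail to lie in $core(X)$ while every geodesic ray from $y$ toward $P_j$ grazes $\partial\mathcal{C}(X)$ (for instance, passes through a conical singularity on the boundary of the core), in which case $y$ belongs to the frontier of $D_j$ but neither to $D_j$ nor to $core(X)$, and clopen-ness fails as stated. The openness and connectedness of $D_j$, the fact that it is a topological disk, and the fact that a single component cannot contain two poles are likewise all asserted rather than established. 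For comparison, the paper sidesteps every one of these issues by citing Proposition 2.3 of \cite{HKK}: $core(X)$ is a deformation retract of $X \setminus \Delta$, so the complement of the core retracts onto the $p$ punctures, giving exactly $p$ components, each an annulus around a single pole and hence a disk once the pole is added. If you want a self-contained proof along your lines, the first step should be run through Theorem 3.11 as above, and the clopen argument should be replaced by an argument that does not require controlling limits of core-avoiding rays.
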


\begin{proof}
Following Proposition 2.3 in \cite{HKK}, $core(X)$ is a deformation retract of $X \backslash \Delta$ where $\Delta$ is the set of poles of higher order.
\end{proof}

\begin{lem} For any flat surface with poles of higher order $X$, $\partial\mathcal{C}(X)$ is a finite union of saddle connections.
\end{lem}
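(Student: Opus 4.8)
The plan is to determine the local shape of $\partial\mathcal{C}(X)$ separately at regular points and at conical singularities, and then to bound the number of resulting pieces. First I would show that $\partial\mathcal{C}(X)$ is geodesic near every regular point. Let $p\in\partial\mathcal{C}(X)$ be a point that is not a conical singularity. Since $p$ is regular, a neighborhood $U$ of $p$ is isometric to a Euclidean disk, in which geodesics are straight segments; by the definition of convexity, $core(X)\cap U$ is then a Euclidean convex set with $p$ on its boundary. If $\partial\mathcal{C}(X)$ were not a single straight segment through $p$, then either $p$ would be a reflex corner (interior angle strictly greater than $\pi$), which directly contradicts the convexity of $core(X)$, or $p$ would be an \emph{extreme point} of $core(X)\cap U$ (a genuine convex corner or a strictly convex boundary point). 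The second possibility is excluded because an extreme point of the convex hull $core(X)=\mathrm{conv}(\Lambda)$ must itself belong to $\Lambda$, whereas $p\notin\Lambda$. Hence $\partial\mathcal{C}(X)$ is locally a geodesic at every regular point.

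Next I would identify the endpoints of these geodesic arcs. By the previous step, a point where two distinct geodesic directions of $\partial\mathcal{C}(X)$ meet, i.e.\ a corner of the boundary, can never be a regular point, so every corner is a conical singularity. Cutting $\partial\mathcal{C}(X)$ at each conical singularity through which it passes therefore decomposes it into geodesic segments whose two endpoints lie in $\Lambda$ and whose interiors meet no conical singularity; by the definition of a saddle connection, each such segment is a saddle connection. Thus $\partial\mathcal{C}(X)$ is a union of saddle connections, and it remains only to check that this union is finite.

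For finiteness I would argue that $\Lambda$ is a finite set, so there are finitely many admissible endpoints, and that by the previous lemma $X\setminus core(X)$ has $p$ components, each a disk, so that $\partial\mathcal{C}(X)$ is a finite collection of closed geodesic polygons. A Gauss--Bonnet argument applied to $core(X)$ then controls the total turning along the boundary in terms of $\chi(core(X))$ and the cone angles; since the cone angles are quantized (each singularity has angle $(a_i+2)\pi$) and $\Lambda$ is finite, only finitely many corners, hence finitely many edges, can occur. The hard part will be exactly this last step, namely ruling out a geodesic polygon that accumulates along the boundary: one genuinely needs the compactness of $core(X)$ together with the discreteness and finiteness of $\Lambda$, equivalently the finiteness of the set of saddle connections of such a surface from \cite{Ta}. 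A secondary point requiring care is the fact, used above, that extreme points of the geodesic convex hull lie in $\Lambda$; the Euclidean argument is clean in a flat chart but must be reconciled with the nontrivial topology of $X$, for which I would fall back on the deformation-retraction description of the core in \cite{HKK}.
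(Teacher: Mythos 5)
The paper gives no direct argument for this lemma --- its entire proof is a citation of Proposition 2.2 in \cite{HKK} --- so your self-contained attempt is necessarily a different route; however, it has genuine gaps. The most concrete one is in your second step: cutting $\partial\mathcal{C}(X)$ at the conical singularities decomposes it into saddle connections \emph{only if every connected component of $\partial\mathcal{C}(X)$ contains a conical singularity}. Your first step (local geodesicity at regular points) is perfectly compatible with a boundary component being a smooth closed geodesic disjoint from $\Lambda$: such a component has no corners, is not a union of saddle connections, and nothing in your argument excludes it. The second gap is the one you yourself flag but misdiagnose as ``secondary'': the claim that extreme points of the geodesic convex hull lie in $\Lambda$ is the heart of the matter, and your proposed fallback --- the deformation-retraction description of the core from \cite{HKK} --- cannot deliver it, since that is a purely topological statement with no control on extreme points. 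Both gaps are closed by the same missing idea, namely \emph{minimality}: $core(X)$ is the intersection of all convex sets containing $\Lambda$, so if a regular boundary point were a corner or a strictly convex point (or if a component were a closed regular geodesic), you could shave off a small cap bounded by a chord (respectively a thin flat collar of the closed geodesic), check in the flat chart (respectively in the flat collar) that no geodesic segment between points of the remaining set can enter the removed region, and thereby produce a strictly smaller convex set containing $\Lambda$ --- a contradiction.

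Your finiteness step also does not work as stated. Gauss--Bonnet does not bound the number of corners, because at a singularity of cone angle $(a_i+2)\pi>2\pi$ the interior angle of the core may exceed $\pi$ --- convexity only forces the wedges of the \emph{complement} $X\setminus core(X)$ to have angle at least $\pi$ --- so the exterior angles in the Gauss--Bonnet formula can be negative and $\chi(core(X))$ alone bounds nothing. The correct count comes from that same convexity constraint: since every complementary wedge at a singularity has angle at least $\pi$, the boundary can pass through the singularity of order $a_i$ at most on the order of $a_i+2$ times, which bounds the total number of corners by roughly $\sum_i(a_i+2)$ and hence the number of edges. Finally, the fact you attribute to \cite{Ta} --- finiteness of the set of \emph{all} saddle connections --- is false and is not what \cite{Ta} proves: a surface whose core is a flat cylinder (such surfaces exist, e.g.\ in $\mathcal{Q}(2,-2)$, see Theorem 2.4(i)) has infinitely many saddle connections wrapping around that cylinder. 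What is finite is the number of \emph{noncrossing} saddle connections and the number of boundary saddle connections; the latter is precisely the statement being proved, and \cite{Ta} develops its counting results on top of the core structure of \cite{HKK}, so invoking it here would in any case be circular.
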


\begin{proof} See Proposition 2.2 in \cite{HKK}.
\end{proof}

In \cite{Ta}, we proved an upper bound on the maximal number of noncrossing saddle connections. The bound depends on the number $\beta$ of saddle connections that belong to the boundary of the core (counted twice if the two sides of a saddle connection belong to domains of poles). A maximal graph of noncrossing saddle connections defines a flat triangulation of the core formed by ideal triangles (triangles where the vertices may be the same).

\begin{thm}
Let $|A|$ be the maximal number of noncrossing saddle connections of a flat surface with poles of higher order $(X,q)$ of genus $g$ belonging to $\mathcal{Q}(a_1,\dots,a_n,-b_1,\dots,-b_p)$, then we have $|A| = 6g-6+3n+3p-\beta$. Besides,
$|A|=2g-2+n+p+t$ where $t$ is the number of ideal triangles in any flat triangulation of the core.
\end{thm}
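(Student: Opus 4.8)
The plan is to prove both identities by a single Euler–characteristic computation, treating the maximal noncrossing graph as a cell decomposition of the closed surface $X$. First I would fix a maximal family $G$ of noncrossing saddle connections and argue, using maximality together with the convexity of $core(X)$, that it subdivides the core into $t$ ideal triangles: if some complementary region inside the core were bounded by more than three saddle connections, one could join two suitable vertices by a geodesic lying inside the region, contradicting maximality. This simultaneously shows that $t$ depends only on the combinatorial type and not on the chosen triangulation, so the statement that $t$ is the number of ideal triangles in \emph{any} flat triangulation of the core is well posed and $|A|$ equals the number of edges of any such triangulation.

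Next I would build a CW structure on the \emph{closed} surface $X$ whose $0$-cells are the $n$ conical singularities $\Lambda$, whose $1$-cells are the $|A|$ saddle connections of $G$, and whose $2$-cells are the $t$ ideal triangles together with the $p$ domains of poles. By Lemma 3.7 each domain of poles is a topological disk, hence a legitimate $2$-cell, and the $t$ triangles fill $core(X)$; therefore these $t+p$ faces exhaust $X\setminus G$. Applying $V-E+F=\chi(X)=2-2g$ gives $n-|A|+(t+p)=2-2g$, that is $|A|=2g-2+n+p+t$, which is the second formula.

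For the first identity I would count incidences between those $2$-cells that are triangles and their bounding edges. Each of the $t$ triangles has three sides, so the number of (triangle, side) incidences equals $3t$. On the other hand each saddle connection has two sides, each bordering either a triangle or a domain of poles; writing $c(s)$ for the number of sides of $s$ that border a triangle, one has $\sum_s c(s)=3t$, while $\beta=\sum_s\bigl(2-c(s)\bigr)$ is exactly the boundary count with the prescribed convention, since a saddle connection flanked by pole domains on both sides contributes $2$. Hence $3t=2|A|-\beta$. Substituting $t=(2|A|-\beta)/3$ into $|A|=2g-2+n+p+t$ and clearing denominators yields $|A|=6g-6+3n+3p-\beta$.

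The main obstacle is the geometric step underlying the whole argument: showing that a maximal noncrossing family really cuts the core into honest ideal triangles, and that the degenerate and partially degenerate configurations are handled uniformly. When $\mathcal{I}\mathcal{C}(X)=\emptyset$ there are no triangles and every boundary saddle connection is flanked by pole domains on both sides, so $\beta=2|A|$ and both formulas collapse correctly to $|A|=2g-2+n+p$; I would check that the edge–face incidence count and the CW decomposition remain valid when a single saddle connection bounds the same pole domain twice or bounds two distinct pole domains, since these are precisely the situations in which the ``counted twice'' convention is active. Verifying that attaching such faces along their boundary walks still gives a CW complex computing $\chi(X)$, and that no conical singularity is ever isolated from $G$, are the points that require care rather than the formal Euler count.
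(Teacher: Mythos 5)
Your proposal is correct and follows essentially the same route as the source: the paper itself does not reprove this theorem but imports it from \cite{Ta}, and the argument there is exactly the one you give --- a maximal noncrossing family triangulates the core into ideal triangles, after which both formulas follow from an Euler-characteristic count (vertices the $n$ conical singularities, edges the $|A|$ saddle connections, faces the $t$ triangles plus the $p$ pole domains) together with the incidence relation $3t = 2|A| - \beta$. The one substantive point, which you correctly flag rather than fully prove, is the triangulation lemma itself (complementary regions inside the core are disks, contain no singularity in their interior, and admit a geodesic diagonal unless they are triangles); that lemma is precisely the geometric content of the proof in \cite{Ta}, and the combinatorial bookkeeping and degenerate-case check you build on top of it are sound.
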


\subsection{Discriminant and walls-and-chambers structure}

Strata of flat surfaces with poles of higher order decompose into chambers where the qualitative shape of the core is the same. The discriminant is the locus that separates these chambers from each other.

\begin{defn} A flat surface with poles of higher order $X$ belongs to the discriminant of the stratum if and only if there exists a pair of two nonparallel consecutive saddle connections of the boundary of the core that share an angle of $\pi$. Chambers are defined to be the connected components of the complementary to the discriminant in the strata.
\end{defn}

The following lemma is proved as Proposition 4.12 in \cite{Ta}.

\begin{lem} The discriminant is a $GL^{+}(2,\mathbb{R})$-invariant hypersurface of real codimension one in the stratum.
\end{lem}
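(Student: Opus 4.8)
The plan is to establish two independent assertions: that the discriminant is preserved by the $GL^{+}(2,\mathbb{R})$-action, and that near any of its points it is cut out by a single nondegenerate real-analytic equation. The conceptual key to both is the observation that an angle of $\pi$ between two consecutive saddle connections is nothing but \emph{collinearity} of the two edges at their common vertex, and collinearity --- unlike the measurement of a general angle --- is preserved by every element of $GL^{+}(2,\mathbb{R})$, since a linear map sends straight lines to straight lines.

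For invariance, I would first record that the core is equivariant: because $g\in GL^{+}(2,\mathbb{R})$ acts by an affine homeomorphism sending geodesics to geodesics and conical singularities to conical singularities, one has $core(g\cdot X)=g\cdot core(X)$, and $g$ carries $\partial\mathcal{C}(X)$ to $\partial\mathcal{C}(g\cdot X)$, sending each boundary saddle connection to a boundary saddle connection while preserving the cyclic order of the boundary graph (consecutiveness being a purely combinatorial datum, stable under the homeomorphism $g$). The two defining conditions then transfer: nonparallelism is the $\mathbb{R}$-linear independence of the associated relative homology classes, which $g$ respects because it acts compatibly with the marking along the orbit, and ``angle $\pi$'' is collinearity, preserved as noted. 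Hence $X$ lies on the discriminant if and only if $g\cdot X$ does.

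For codimension one, I would work in the period coordinates on $H_{-}$ furnished by Theorem 2.1 of \cite{BCGGM1}. Fix a point $X_{0}$ of the discriminant realized by a nonparallel consecutive pair $\gamma_{1},\gamma_{2}$ meeting at angle $\pi$. The periods (equivalently, the holonomies) $z_{1}$ and $z_{2}$ of $\gamma_{1}$ and $\gamma_{2}$ are $\mathbb{C}$-linear functionals of the period coordinates, and collinearity is the single real-analytic equation $\Ima(\overline{z_{1}}\,z_{2})=0$, with an open sign condition on $\Rea(\overline{z_{1}}\,z_{2})$ selecting the interior angle $\pi$ rather than $0$. The nonparallel hypothesis is exactly what makes this equation nondegenerate: were $z_{1}=c\,z_{2}$ identically for some $c\in\mathbb{C}$, then $\Ima(\overline{z_{1}}z_{2})=-\Ima(c)\,|z_{2}|^{2}$, so collinearity at $X_{0}$ (with $z_{2}\neq 0$, since saddle connections have positive length) would force $c\in\mathbb{R}$, i.e. $[\gamma_{1}],[\gamma_{2}]$ would be $\mathbb{R}$-proportional, contradicting nonparallelism. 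Thus $z_{1},z_{2}$ are independent coordinates and the differential of $\Ima(\overline{z_{1}}z_{2})$ is nonzero at $X_{0}$, so each configuration cuts out a smooth codimension-one real-analytic hypersurface near $X_{0}$; the full discriminant is the locally finite union of such pieces.

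The main obstacle I anticipate is local persistence and bookkeeping rather than the codimension count itself. One must verify that $\gamma_{1},\gamma_{2}$ and their boundary-consecutiveness persist throughout a full neighborhood of $X_{0}$, so that a single equation governs the wall there, even though the combinatorial type of the core genuinely changes as the shared angle passes through $\pi$. This is handled by noting that saddle connections vary real-analytically and cannot vanish under small deformation, while the interior angle at the common vertex varies continuously through $\pi$; on a compact region only finitely many collinearity conditions can hold, so the union is locally finite, its smooth top stratum is the asserted codimension-one hypersurface, and deeper coincidences are confined to strata of higher codimension.
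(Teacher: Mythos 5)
There is nothing in the paper itself to compare your argument against: the paper's entire justification of this lemma is a citation (``The following lemma is proved as Proposition 4.12 in \cite{Ta}''), so your self-contained argument is necessarily a different route, and it is essentially correct. The invariance half is clean: equivariance of the core under $GL^{+}(2,\mathbb{R})$, the combinatorial nature of consecutiveness, the $\mathbb{R}$-linearity of the action on periods (which preserves nonparallelism), and your key observation that an angle of exactly $\pi$ is collinearity of the developed edges, hence preserved by linear maps even though general angle values are not. The codimension-one half in period coordinates is also sound, including the nondegeneracy step: nonparallelism of $[\gamma_{1}],[\gamma_{2}]$ forces the evaluation functionals $z_{1},z_{2}$ to be $\mathbb{C}$-linearly independent, and then a tangent direction moving $z_{2}$ while fixing $z_{1}$ gives $d\,\Ima(\overline{z_{1}}z_{2})\neq 0$ at $X_{0}$ because $z_{1}(X_{0})\neq 0$. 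Two points deserve tightening in a full write-up: first, the collinearity equation also vanishes when the shared angle equals $3\pi, 5\pi,\dots$, so identifying the wall locally with the zero set really does require the continuity-of-the-angle argument you invoke, not just the sign of $\Rea(\overline{z_{1}}z_{2})$; second, local finiteness needs a uniform length bound on boundary saddle connections over a neighborhood of $X_{0}$, so that only finitely many relative homology classes can contribute a wall there --- both are routine compactness arguments and hide no gap. What your route buys is a proof readable inside this paper with the tools it already sets up (the period coordinates of \cite{BCGGM1} and the definition of the discriminant); what the paper's route buys is brevity, deferring the geometry of walls to \cite{Ta}, where the walls-and-chambers structure is developed systematically.
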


The topological map on a flat surface with poles of higher order $(X,q)$ defined by the embedded graph $\partial\mathcal{C}(X)$ is invariant along the chambers. The qualitative shape of the core and in particular the number of saddle connections of its boundary depend only on the chamber (see Proposition 4.13 in \cite{Ta} for details).

\subsection{Dynamics and decomposition into invariant components}

We essentially follow the definitions Strebel gives in \cite{St}.

\begin{defn} Depending on the direction, a trajectory  starting from a regular point is of one of the four following types:\newline
- regular closed geodesic (the trajectory is periodic),\newline
- critical trajectory (the trajectory reaches a conical singularity in finite time),\newline
- trajectory finishing at a pole (the trajectory converges to a pole of higher order as $t\rightarrow+\infty$),\newline
- recurrent trajectory (infinite trajectory nonconverging to a pole of higher order).
\end{defn}

Theorem 3.11 describes how the directional flow decomposes flat surfaces into a finite number of invariant components. This theorem is proved as Theorem 2.3 in \cite{Ta}.

\begin{thm} Let $(X,q)$ be a flat surface with poles of higher order. Cutting along all saddle connections sharing a given direction $\theta$, we obtain finitely many connected components called \textit{invariant components}. 
There are four types of invariant components: \newline
- \textbf{finite volume cylinders} where the leaves are periodic with the same period, \newline
- \textbf{minimal components} of finite volume where the foliation is minimal, the directions are recurrent and whose dynamics are given by a nontrivial interval exchange map, \newline
- \textbf{infinite volume cylinders} bounding a simple pole and where the leaves are periodic with the same period, \newline
- \textbf{free components} of infinite volume where generic leaves go from a pole to another or return to the same pole.\newline
Finite volume components belong to $core(X)$.
\end{thm}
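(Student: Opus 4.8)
The plan is to fix the direction $\theta$ and, after applying a rotation, to assume it is horizontal; I would then pass to the canonical double cover $(\widetilde{X},\omega)$ so that the quadratic differential becomes an abelian differential and the horizontal foliation becomes \emph{orientable}, i.e. the flow of a genuine horizontal vector field away from the singularities. Since the involution $\tau$ preserves this foliation, any $\tau$-equivariant decomposition of $\widetilde{X}$ descends to the wanted decomposition of $X$, so it suffices to work upstairs with an honest flow.

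First I would establish the finiteness assertion. Each conical singularity emits only finitely many horizontal separatrices, and there are finitely many singularities, so there are finitely many horizontal separatrices in all. A horizontal saddle connection is made of a pair of such separatrices meeting at conical singularities, hence there are only finitely many horizontal saddle connections. Cutting $X$ along this finite collection removes a finite embedded graph and leaves finitely many connected pieces, each invariant under the horizontal flow; these are the invariant components of the statement.

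Next I would split the pieces according to their area. A piece of \emph{finite} area meets no domain of poles, since by Lemma 3.5 the components of $X\setminus core(X)$ are exactly where the infinite area sits, via the local model $|q|\sim|z|^{-b}$ at a pole of order $b\ge 2$; hence every finite-area piece lies in $core(X)$, which already yields the last sentence of the theorem. On such a piece the classical Maier--Strebel argument applies: by Poincar\'e recurrence every regular leaf is recurrent, so it is either closed or dense. If all leaves are closed, the piece is swept out by parallel closed geodesics and is a \emph{finite volume cylinder}; otherwise the first-return map to a transversal arc is a minimal interval exchange transformation and the piece is a \emph{minimal component}, the exchange being nontrivial precisely because not every leaf is closed. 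These two cases realize respectively the regular closed and the recurrent trajectories of Definition 3.10.

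The genuinely new analysis concerns the \emph{infinite-area} pieces, those meeting a domain of poles, where Poincar\'e recurrence is unavailable and a leaf may run off to a pole, realizing the trajectories finishing at a pole of Definition 3.10. Here I would invoke Strebel's local normal forms near the poles \cite{St}. A pole of order two of $q$ is a global square $(z^{-1}dz)^2$, so upstairs it is a \emph{simple pole} of $\omega$ with a residue; when that residue is horizontal the end is a half-infinite cylinder of closed leaves, giving an \emph{infinite volume cylinder bounding a simple pole}. In every other configuration a neighbourhood of a pole of order $b$ splits into $b-2$ sectors separated by distinguished horizontal directions, and a generic leaf entering a sector converges to the pole; tracking entries and exits then shows that the generic leaf of the piece runs from a pole to a pole, possibly the same one, producing a \emph{free component} of infinite volume. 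The main obstacle is exactly this last step: one must rule out infinite recurrent horizontal trajectories inside the infinite-area pieces and show that, once the finitely many separatrices that themselves hit poles are removed, every remaining leaf is trapped into one of the $b-2$ sectors and therefore escapes. I expect the finite combinatorial complexity of the pole ends, rather than any recurrence mechanism, to be what forces this dichotomy.
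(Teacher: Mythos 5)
The paper never proves this statement at all: Theorem 3.11 is imported verbatim from the author's earlier work (it ``is proved as Theorem 2.3 in \cite{Ta}''), so there is no internal proof to compare yours against, and your proposal has to stand on its own. As it stands it has two genuine gaps. First, your deduction that a finite-area invariant component ``meets no domain of poles'' because the domains of poles are ``where the infinite area sits'' is a non-sequitur: an infinite-area disk can perfectly well contain an invariant subset of finite area, so finiteness of area alone does not prevent a component from protruding out of $core(X)$. Second, and more seriously, the step you yourself flag as ``the main obstacle'' --- ruling out recurrent leaves in the infinite-area pieces and showing that a generic leaf ends up converging to a pole in both time directions --- is precisely the substance of the theorem for free components, and you leave it as an expectation rather than an argument. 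Strebel's sector analysis in \cite{St} only controls a leaf \emph{after} it has entered a small neighbourhood of a pole; by itself it cannot exclude a leaf that enters a domain of a pole, leaves it, wanders through the core, enters another domain, leaves again, and so on, nor a leaf that is recurrent inside the infinite-area piece without ever approaching a pole.

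The missing idea that repairs both gaps is the convexity of $core(X)$ (Definition 3.4, Lemmas 3.5 and 3.6). Since $\partial D\subset\partial\mathcal{C}(X)\subset core(X)$ for every domain of a pole $D$, and $D\cap core(X)=\emptyset$, a leaf that entered $D$ at a point $x\in\partial D$ and later exited at $y\in\partial D$ would give a geodesic segment with both endpoints in the convex set $core(X)$ but with interior points outside it --- impossible. Hence leaves are \emph{trapped}: once a leaf enters a domain of a pole it never leaves it, and Strebel's local theory then forces it to converge to the pole (or, for a pole of order two whose residue direction is $\theta$, to close up, producing the infinite-volume cylinder). This trapping statement yields both missing points. (a) A closed leaf of a finite-volume cylinder, or a dense leaf of a minimal component, can never meet a domain of a pole: entering and exiting is excluded by trapping, while a closed leaf lying entirely inside $D$ bounds a disk whose only singularity is the pole, and Gauss--Bonnet then forces the pole to have order two and the surrounding cylinder to have infinite volume; this proves that finite-volume components lie in $core(X)$. (b) In an infinite-area component $P$, the set $K$ of points whose leaf never enters a domain of a pole is a closed invariant subset of the finite-area set $P\cap core(X)$; if $K$ had positive measure, Poincar\'e recurrence plus the Maier-type dichotomy would produce inside $P$ a cylinder or minimal set bounded by saddle connections in direction $\theta$, which cannot exist since all such saddle connections were cut and $P$ has infinite area. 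So $K$ is null, and the generic leaf of $P$ enters pole domains in both time directions, gets trapped, and converges to poles --- exactly the description of free components. With this lemma inserted, the rest of your outline (double cover, finiteness of the cut locus, the cylinder/minimal dichotomy on finite-area pieces, Strebel's normal forms at the poles) is correct.
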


\subsection{$GL^{+}(2,\mathbb{R})$ action and contraction flow}

On each stratum, $GL^{+}(2,\mathbb{R})$ acts by composition with coordinate functions, see \cite{Zo}. Since neighborhoods of poles of higher order have infinite area, we cannot normalize the area of the surface and thus must consider the full action of $GL^{+}(2,\mathbb{R})$.\newline

For a flat surface $(X,q)$, the \textit{stabilizer} $stab(X) \subset GL^{+}(2,\mathbb{R})$ is the subgroup of those $g \in GL^{+}(2,\mathbb{R})$ for which $gX = X$. The quotient $stab(X)/\lbrace\pm Id\rbrace \subset GL^{+}(2,\mathbb{R})/\lbrace\pm Id\rbrace$ is called the \textit{Veech Group} of the flat surface $(X,q)$.\newline
Veech groups of two surfaces belonging to the same $GL^{+}(2,\mathbb{R})$-orbit are conjugated.
\newline

The contraction flow is a tool that allows to construct surfaces with degenerated core in a systematic way. All foundation results are already proved in \cite{Ta}.\newline

\begin{defn} Let $\alpha$ and $\theta$ be two distinct directions. The contraction flow is the action of the semigroup of matrices $C^{t}_{\alpha,\theta}$ conjugated to $\begin{pmatrix} e^{-t} & 0 \\ 0 & 1 \end{pmatrix}$ such that $\alpha$ is the contracted direction and $\theta$ is the preserved direction.
\end{defn}

\begin{lem}
Let $(X,q)$ be a flat surface with poles of higher order. $\theta$ is a direction that is not a direction of a saddle connection and $\alpha$ is any direction different from $\theta$. The sequence $C^{t}_{\alpha,\theta}(X,q)$ converges to a surface $(X_{0},q_{0})$ in the ambiant stratum. All saddle connections of $(X_{0},q_{0})$ share the same direction $\theta$ and $core(X_{0})$ is degenerated. 
\end{lem}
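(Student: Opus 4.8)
The plan is to work in the period coordinates of subsection~3.4 and to use that $C^{t}_{\alpha,\theta}$ acts diagonally there. Identifying $\mathbb{C}\cong\mathbb{R}^{2}$, the group $GL^{+}(2,\mathbb{R})$, and hence the semigroup $C^{t}_{\alpha,\theta}$ of Definition~3.12, acts $\mathbb{R}$-linearly on the periods of a basis of $H_{-}$. Let $u_{\alpha}$ and $u_{\theta}$ be unit vectors along the directions $\alpha$ and $\theta$. By Definition~3.12, $C^{t}_{\alpha,\theta}$ is diagonal in the basis $(u_{\alpha},u_{\theta})$, with $C^{t}_{\alpha,\theta}u_{\alpha}=e^{-t}u_{\alpha}$ and $C^{t}_{\alpha,\theta}u_{\theta}=u_{\theta}$. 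Decomposing a period as $z=x\,u_{\alpha}+y\,u_{\theta}$ gives $C^{t}_{\alpha,\theta}z=e^{-t}x\,u_{\alpha}+y\,u_{\theta}\longrightarrow y\,u_{\theta}$ as $t\to+\infty$. Hence each of the finitely many coordinates of the period map converges, the sequence $C^{t}_{\alpha,\theta}(X,q)$ converges coordinatewise in the period chart to a limit $(X_{0},q_{0})$, and every limiting period is a real multiple of $u_{\theta}$; since an arbitrary relative class is an integer combination of the basis classes, in fact \emph{every} period of $(X_{0},q_{0})$ is carried by the direction $\theta$.

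The delicate point, which I expect to be the main obstacle, is to ensure that this limit lies in the ambient stratum and not on its boundary. The only way to leave the stratum under this limit is for a saddle connection to shrink to zero length, forcing conical singularities to merge; by the computation above this happens exactly for a saddle connection whose holonomy is carried by the contracted direction $\alpha$, that is, whose component $y$ vanishes. To rule this out I would, for $t$ large, fix the finitely many boundary saddle connections of Lemma~3.6 together with a flat triangulation of the core from Theorem~3.7, and follow their holonomies under the flow. As long as none of these finitely many saddle connections is carried by $\alpha$, their lengths stay bounded away from $0$, so no singularities merge, the combinatorial type of the triangulation stabilises, and the limit is a genuine flat surface of the same stratum: the triangles do not collapse to a point, they merely flatten to segments as their edges become parallel. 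This is precisely where the hypotheses must be used: a flat surface with poles of higher order has only finitely many saddle connections, hence finitely many critical directions, so the contracted direction $\alpha$ may (and must) be kept disjoint from them, while the assumption that $\theta$ itself is not a saddle connection direction makes the common limiting direction generic. Pinning down the exact genericity needed on the contracted direction is the crux of the argument.

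With $(X_{0},q_{0})$ in the stratum, both conclusions are immediate. Since every period of $(X_{0},q_{0})$ is a real multiple of $u_{\theta}$, the holonomy of every saddle connection lies along $\theta$, so all saddle connections of $(X_{0},q_{0})$ share the direction $\theta$. Finally, $core(X_{0})$ is degenerated: by Definition~3.4 it is the convex hull of the conical singularities, and by Lemma~3.6 its boundary is a finite union of saddle connections, now all parallel to $\theta$. A bounded convex subset of the plane whose boundary edges are all parallel encloses no area, so $\mathcal{I}\mathcal{C}(X_{0})=\emptyset$ and $core(X_{0})$ coincides with the graph $\partial\mathcal{C}(X_{0})$, which is exactly the definition of a degenerated core.
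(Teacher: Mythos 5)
Your period computation is fine, but the proof has genuine gaps, and they sit exactly where the paper's own (outsourced) proof does its real work: the paper does not prove this lemma internally, it cites Lemma 2.2 of \cite{Ta}, whose argument is structural rather than coordinate-based. Since $\theta$ is not the direction of any saddle connection, the decomposition of Theorem 3.11 shows the foliation in direction $\theta$ has no finite-volume invariant components, so the surface is a finite union of half-planes and infinite strips parallel to $\theta$ glued along separatrices; under $C^{t}_{\alpha,\theta}$ the half-planes persist, the strips collapse, and the limit surface is written down explicitly. Your replacement for this is a compactness claim hinging on the assertion that a flat surface with poles of higher order has \emph{finitely} many saddle connections, and that assertion is false: whenever the core contains a flat cylinder (the situation of Proposition 4.2 and Figure 2), saddle connections wrap around it arbitrarily many times, with directions accumulating on the cylinder direction. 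The paper only ever claims the set of holonomy vectors is \emph{discrete} (proof of Theorem 2.1), and Theorem 3.2 is a lower bound, not an upper bound. This breaks both halves of your plan: you cannot choose $\alpha$ outside ``the finitely many critical directions'', and keeping the finitely many edges of one triangulation long does not bound the systole from below, since a saddle connection outside the triangulation, or a sequence of wrapping ones whose directions accumulate at $\alpha$, can still shrink.

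Two further points. First, you misidentify what the hypothesis on $\theta$ is for. The danger when passing to the limit is not only that saddle connections shrink: collapsing the direction transverse to a cylinder or a minimal component lying in direction $\theta$ pinches closed geodesics representing nontrivial absolute homology classes, so the surface can leave the stratum while every saddle connection stays long. The assumption that $\theta$ carries no saddle connection is precisely what excludes such components (their boundaries consist of saddle connections in direction $\theta$) and yields the half-plane-and-strip structure; it is not there to make ``the common limiting direction generic'', and your proposal never uses it in the way that is actually needed. Second, you leave your self-declared crux open: the lemma as stated allows \emph{any} $\alpha\neq\theta$, while your argument only works when $\alpha$ avoids saddle-connection directions. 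Your unease here is well founded -- for two flat planes glued along a vertical slit, taking $\theta$ horizontal (not a saddle-connection direction) and $\alpha$ vertical shrinks the slit, and the limit leaves the stratum -- so a restriction on $\alpha$ is genuinely needed and is implicit in how the paper uses the lemma (in the proof of Theorem 2.2 the contracted direction is free to be chosen generically). But a proof that flags its central step as ``the crux of the argument'' and does not carry it out is not a proof; to complete it you would need discreteness \emph{and closedness} of the set of saddle-connection directions to get a uniform lower bound on limiting lengths, plus the invariant-component decomposition to rule out pinching, which is exactly the route of \cite{Ta}.
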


\begin{proof}
See Lemma 2.2 of \cite{Ta}.
\end{proof}

\section{Classification of Veech Groups}

Veech groups of flat surfaces with poles of higher order are very different from those of usual translation surfaces. They are either very big (not discrete) or very small (virtually cyclic). For instance, they cannot be lattices so there are no Veech surfaces with poles.\newline

\begin{proof}[Proof of Theorem 2.1]
Let $(X,q)$ be a flat surface with poles of higher order. There is a finite number of saddle connections in $\partial\mathcal{C}(X)$. The holonomy vectors of these saddle connections are a finite subset of $\mathbb{C}/\lbrace\pm1\rbrace$. Since this set is preserved by the action of the Veech group, either all saddle connections of $\partial\mathcal{C}(X)$ share the same direction or the Veech group is a finite group. Finite subgroups of $GL^{+}(2,\mathbb{R})$ are rotation groups. We say in this case that the Veech group is of finite type.\newline

In the following, we consider that all saddle connections of $\partial\mathcal{C}(X)$ share the same direction $\theta$. This direction is unchanged by the action of the Veech group and the lengths of these saddle connections are preserved. Therefore, in this case, the Veech group is conjugated to a subgroup of $\left\lbrace\begin{pmatrix} 1 & b \\ 0 & a \end{pmatrix} \mid a,b \in \mathbb{R} \right\rbrace$.\newline

The next disjonction is between surfaces with degenerated and nondegenerated core. If $core(X)$ is degenerated, then every saddle connection belongs to $\partial\mathcal{C}(X)$ and by hypothesis they belong to the same direction. Without loss of generality, we can consider they all belong to the horizontal direction. The whole geometry of $X$ is that of infinite vertical strips whose intersection with $core(X)$ are horizontal saddle connections. Thus, the action of every element of $GL^{+}(2,\mathbb{R})$ preserving the horizontal direction leaves $(X,q)$ unchanged. Thus, the Veech group of $(X,q)$ is conjugated to $\left\lbrace\begin{pmatrix} 1 & b \\ 0 & a \end{pmatrix} \mid a,b \in \mathbb{R} \right\rbrace$.\newline

If $core(X)$ is not degenerated, then the Veech group of $(X,q)$ is a subgroup of 
$SL(2,\mathbb{R})/ \lbrace\pm Id\rbrace$ because its action preserves the area of the core. Besides, nondegenerated core implies that there are two saddle connections that have different directions. The set of holonomy vectors of the saddle connections of the whole surface is a discrete subset $S$ of $\mathbb{C}/\lbrace\pm1\rbrace$ that is preserved by the action of the Veech group. As there are two saddle connections that have different directions, then an element of the Veech group is entirely characterized by its action on $S$. Therefore, the Veech group of $X$ is a discrete subgroup of $GL^{+}(2,\mathbb{R})/ \lbrace\pm Id\rbrace$. Besides, the action of the Veech group preserves the area of the core. This implies in particular that the Veech group of $(X,q)$ is a discrete subgroup of $SL(2,\mathbb{R})/ \lbrace\pm Id\rbrace$. We know it is also conjugated to a subgroup of $\left\lbrace\begin{pmatrix} 1 & b \\ 0 & a \end{pmatrix} \mid a,b \in \mathbb{R} \right\rbrace$. Therefore, either it is conjugated to $\left\lbrace \begin{pmatrix} 1&k \\ 0 & 1 \end{pmatrix} \mid k \in \mathbb{Z} \right\rbrace$ or it is the trivial group.\newline
\end{proof}

One can find a realization for every Veech group of the classification provided by Theorem 2.1. The following example shows in particular that every cyclic finite group is realizable as a Veech group.

\begin{ex} Gluing infinite cylinders on the edges of a regular $2k$-gon, we get a surface in $\mathcal{Q}(4k-4,-2^{2k})$ whose Veech group is conjugated to $\mathbb{Z}/k\mathbb{Z}$, see Figure 1.

\begin{figure}
\includegraphics[scale=0.3]{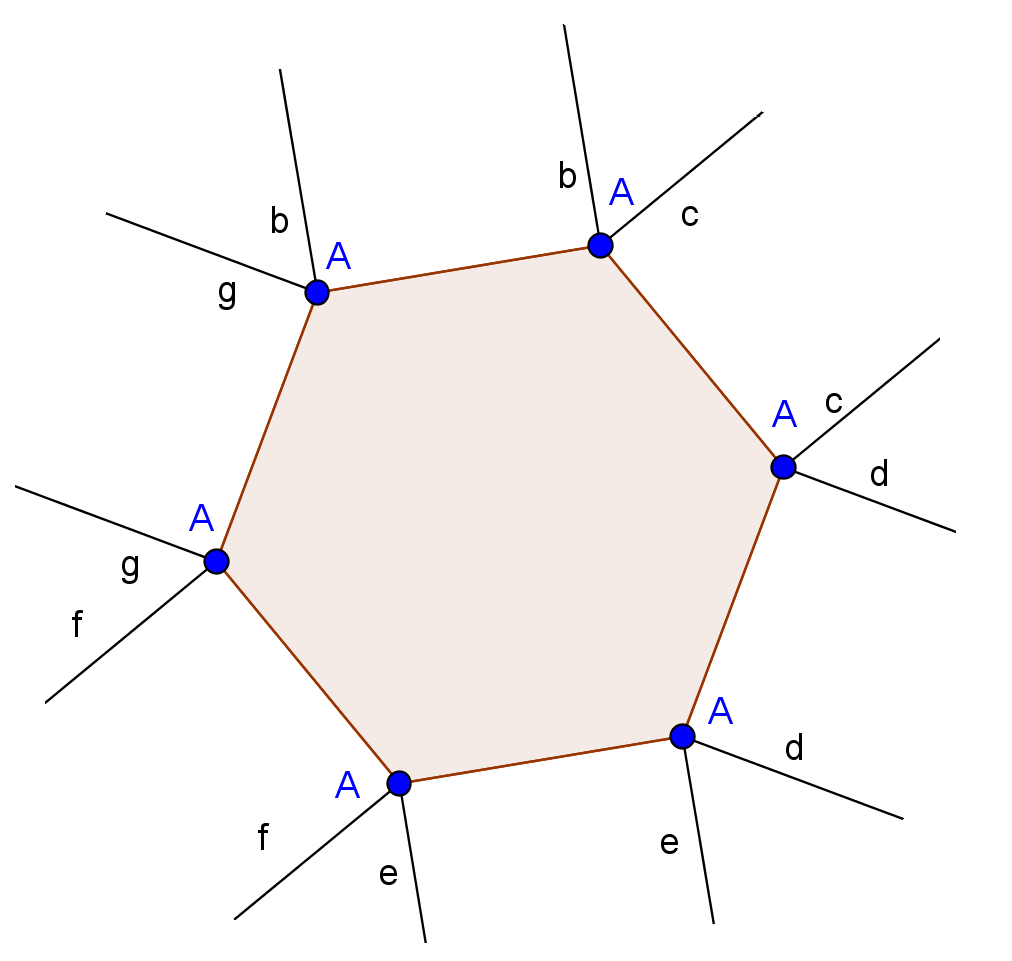}
\caption{A flat surface in $\mathcal{Q}(8,-2^{6})$ with a discrete rotational symmetry.}
\end{figure}
\end{ex}

In the following, we characterize surfaces with Veech groups of cyclic parabolic type in terms of the decomposition in invariant components.

\begin{prop} The Veech group of a flat surface with poles of higher order $X$ is of cyclic parabolic type if and only if the following proposition hold:\newline
- every saddle connection of $\partial\mathcal{C}(X)$ belongs to the same direction $\theta$,\newline
- $core(X)$ is not degenerated and admits a decomposition into a finite number of cylinders with commensurable moduli in direction $\theta$.\newline
\end{prop}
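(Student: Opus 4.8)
The plan is to reduce the statement, in both directions, to the analysis already carried out in the proof of Theorem 2.1 and to isolate the single new ingredient, which concerns the cylinder structure of the core. Indeed, if the Veech group is of cyclic parabolic type it is nontrivial and (up to conjugation) contained in the discrete unipotent group, so the trichotomy established in the proof of Theorem 2.1 immediately yields that all saddle connections of $\partial\mathcal{C}(X)$ share a common direction $\theta$ and that $core(X)$ is nondegenerate (distinct boundary directions would force finite type, a degenerate core would force continuous type). Conversely, the same proof shows that \emph{under} these two hypotheses the Veech group is either trivial or of cyclic parabolic type. Hence for the forward implication the only content is that a nontrivial parabolic automorphism forces a cylinder decomposition of the core with commensurable moduli, and for the converse the only content is that such a decomposition produces a nontrivial parabolic automorphism. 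Throughout I normalize $\theta$ to be horizontal.

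For the forward implication, let $P$ generate the cyclic parabolic group, so that $P$ is an affine automorphism of $(X,q)$ with derivative a nontrivial horizontal shear $\begin{pmatrix} 1 & s \\ 0 & 1 \end{pmatrix}$. Since $P$ fixes the horizontal direction it maps horizontal saddle connections to horizontal saddle connections, hence preserves $core(X)$ and its decomposition into invariant components in direction $\theta$ furnished by Theorem 3.11; as there are finitely many finite-volume components, a power $P^N$ fixes each of them setwise. I then rule out minimal components using the principle that the fixed direction of a parabolic automorphism of a finite-area surface is completely periodic: a minimal component has no closed horizontal leaves, hence no horizontal cylinder to twist about, so it cannot carry a nontrivial parabolic automorphism fixing the horizontal direction. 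Therefore every finite-volume component in $core(X)$ is a horizontal cylinder, and on the $i$-th cylinder $P^N$ acts as $n_i$ Dehn twists; this forces the total shear $Ns$ to be an integer multiple of the inverse modulus $1/m_i$ of each cylinder, and eliminating $Ns$ gives $m_i/m_j \in \mathbb{Q}$ for all $i,j$, i.e. commensurability.

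For the converse it suffices, by the remark above, to produce one nontrivial parabolic automorphism. Writing the commensurable moduli as $m_i = \lambda r_i$ with $r_i \in \mathbb{Q}$, I choose $s>0$ so that the shear by $s$ amounts to an integer number $n_i$ of Dehn twists in each cylinder simultaneously (possible precisely because the $m_i$ are commensurable). Performing $n_i$ Dehn twists in each cylinder defines an affine self-map of $core(X)$ with derivative $\begin{pmatrix} 1 & s \\ 0 & 1 \end{pmatrix}$; the individual shears agree along the shared horizontal saddle connections, so they assemble into a single automorphism of the core. It remains to extend this map across the $p$ domains of poles of Lemma 3.7. Here I use that a domain of poles contains no saddle connection in its interior, so it is rigid in the horizontal direction: exactly as in the degenerate-core (continuous type) analysis of Theorem 2.1, each domain is built from infinite horizontal strips attached along the horizontal saddle connections of $\partial\mathcal{C}(X)$, and a horizontal shear preserves such strips. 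Since the map already realizes the correct shear on the boundary, it extends uniquely over each domain, giving a global affine automorphism of $(X,q)$ with nontrivial parabolic derivative; the Veech group is therefore of cyclic parabolic type.

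The step I expect to be the main obstacle is the exclusion of minimal components in the forward implication, namely making rigorous the principle that a parabolic automorphism's fixed direction is completely periodic. On a closed translation surface this is classical, but here $core(X)$ is a finite-area surface whose cylinders abut the domains of poles along $\partial\mathcal{C}(X)$, so one must run the ``parabolic automorphism is a product of Dehn twists'' argument in the presence of this horizontal boundary and confirm that a minimal component genuinely admits no compatible parabolic. A secondary technical point, needed for the converse, is the precise sense in which a domain of poles is determined by its horizontal boundary so that the shear extends; this is where the infinite-strip description of pole neighborhoods and the absence of interior saddle connections must be invoked with care.
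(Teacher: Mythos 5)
Your reduction to the dichotomy of Theorem 2.1, and your converse direction (a multi-twist assembled from the commensurable cylinders, extended over the domains of poles by the same infinite-strip mechanism that yields the continuous type in Theorem 2.1), follow the paper's line and are sound at the paper's own level of rigor. The genuine gap is the one you flag yourself: the exclusion of minimal components in the forward implication. As written, your justification is circular: saying that a minimal component ``has no horizontal cylinder to twist about, so it cannot carry a nontrivial parabolic automorphism'' presupposes exactly what must be proved, namely that an affine automorphism with parabolic derivative fixing $\theta$ can only arise as twists in cylinders. The classical statement (Veech) that the fixed direction of a parabolic is completely periodic is a theorem about closed finite-area surfaces without boundary; you cannot invoke it verbatim for an invariant component of $core(X)$, which has horizontal boundary, so at the decisive step your proof currently has no argument.

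The paper's proof is built precisely to sidestep this: it identifies the boundary saddle connections of $\mathcal{I}\mathcal{C}(X)$ pairwise (possible since they are all horizontal) to obtain a closed, finite-area half-translation surface $(Y,q')$ whose dynamics in direction $\theta$ agrees with that of $\mathcal{I}\mathcal{C}(X)$, and then quotes the classical criterion (existence of a parabolic fixing $\theta$ is equivalent to a cylinder decomposition in direction $\theta$ with commensurable moduli, see \cite{HL}, \cite{Ve}) as a black box in both directions. If you prefer to keep your direct approach on $X$ itself, the gap can be closed by a separatrix argument: a suitable power $P^{k}$ of your parabolic fixes each conical singularity and each horizontal separatrix entering a minimal component $\Omega$; since $DP^{k}$ fixes horizontal vectors, $P^{k}$ acts on such a separatrix as an orientation-preserving isometry of a ray fixing its origin, hence is the identity on it; by minimality that separatrix is dense in $\Omega$, so $P^{k}$ is the identity on $\Omega$, contradicting $DP^{k}\neq Id$. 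With that lemma inserted, the rest of your forward direction (only cylinders remain, $P^{N}$ acts as $n_{i}$ twists, $Ns=n_{i}/m_{i}$, hence $m_{i}/m_{j}\in\mathbb{Q}$) is complete, and your proof becomes genuinely more self-contained than the paper's, at the price of reproving a special case of Veech's criterion by hand.
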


\begin{proof}
We first assume the Veech group of a flat surface $(X,q)$ is of cyclic parabolic type. We denote by $\theta$ the direction preserved by the parabolic element $M$. Since the boundary of $core(X)$ is a finite union of saddle connection that is globally preserved by the action of the parabolic element, all of them share the same direction $\theta$. Besides, $core(X)$ is not degenerate because otherwise the Veech group would be of continuous type, see proof of Theorem 2.1.\newline

Among flat surfaces whose core is not degenerate and such that every saddle connection of $\partial\mathcal{C}(X)$ belongs to the same direction $\theta$, we will prove that there is a parabolic element in the Veech group that preserves direction $\theta$ if and only if $core(X)$ decomposes into a finite number of cylinders with commensurable moduli in direction $\theta$.\newline

We identify the saddle connections in the boundary of the interior of the core $\mathcal{I}\mathcal{C}(X)$ in an arbitrary way and get a classical half-translation  surface of finite area $(Y,q')$ whose dynamics in direction $\theta$ is exactly the same as that in $\mathcal{I}\mathcal{C}(X)$. It must be noted that $Y$ may fail to be connected.\newline
It is well known that for classical translation surfaces (and half-translation surfaces), existence of a parabolic element of the Veech group preserving a direction $\theta$ is equivalent to existence of a decomposition of the surface into a finite number of cylinders whose closed geodesics belong to direction $\theta$ and whose moduli are commensurable, see Subsection 2.3 in \cite{HL} or \cite{Ve}.\newline
Consequently, if the Veech group of $(X,q)$ is of cyclic parabolic type, then there is a parabolic element $M$ that preserves $core(X)$ and therefore $(Y,q')$. Therefore, $(Y,q')$ (and $\mathcal{I}\mathcal{C}(X)$) admits a decomposition into cylinders whose moduli are commensurable.\newline
Conversely, if $\mathcal{I}\mathcal{C}(X)$) admits a decomposition into cylinders whose moduli are commensurable in some direction $\theta$, then $(Y,q')$ admits the same decomposition and there is a parabolic element that preserves $(Y,q')$ and direction $\theta$. Since the rest of $X$ is formed by saddle connections whose direction is also $\theta$, then parabolic element $M$ preserves the whole surface $(X,q)$ and its Veech group is of cyclic parabolic type.
\end{proof}

\begin{ex} We can construct many surfaces with a Veech group of cyclic parabolic type starting from a square-tiled core, see Figure 2.

\begin{figure}
\includegraphics[scale=0.3]{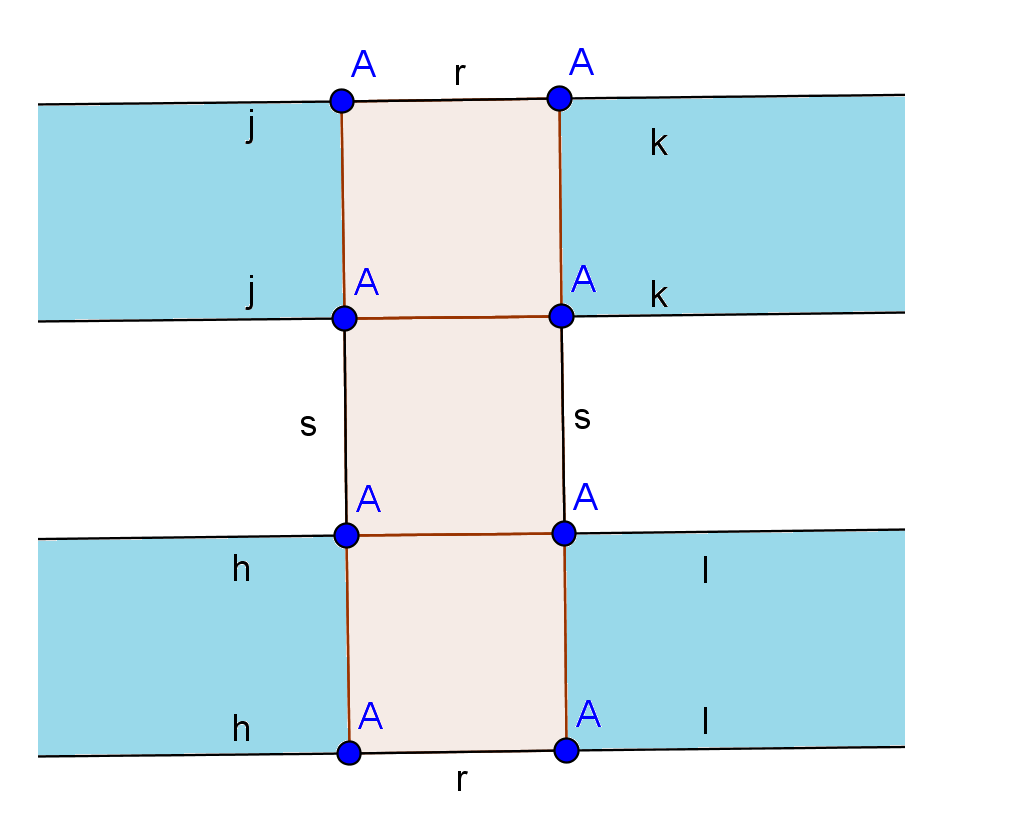}
\caption{A flat surface in $\mathcal{Q}(8,-2^{4})$ whose Veech group is of cyclic parabolic type.}
\end{figure}
\end{ex}

Starting from any surface, the contraction flow in a generic direction provides examples of surfaces with a Veech group of infinite type, see Figure 3.\newline

\begin{figure}
\includegraphics[scale=0.3]{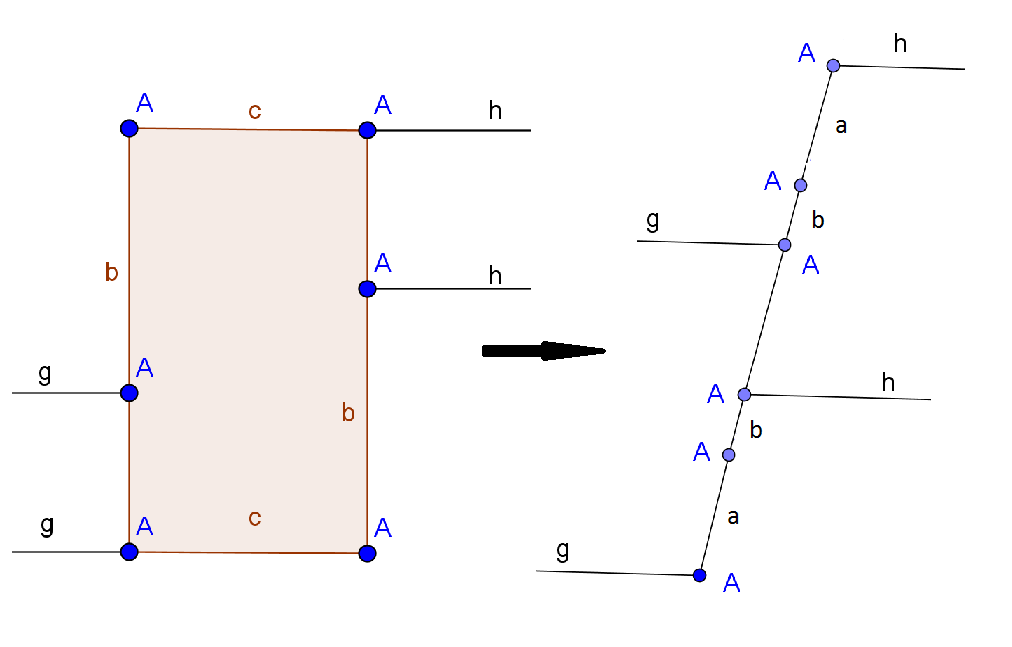}
\caption{A surface in $\mathcal{Q}(4,-2^{2})$ and the surface obtained through the use of the contraction flow along a given direction.}
\end{figure}

The contraction flow provides many examples of surfaces with a Veech group of infinite type. However, many of these surfaces belong to the discriminant of the ambiant stratum. The following example shows that this is not always true.

\begin{ex}
Three flat planes with a vertical slit in each of them and connected cyclically provide a surface in $\mathcal{Q}(4^{2},-4^{3})$ with a Veech group of continuous type and that does not belong to the discriminant.
\end{ex}

\section{Closedness of orbits}

In the study of classical translation surfaces, the Veech surfaces have a particular importance because they are the class of surfaces such that their $GL^{+}(2,\mathbb{R})$-orbit is closed, see \cite{HSc}. Here, we characterize the class of flat surfaces with poles of higher order that share such a property. Theorem 2.2 proves that surfaces whose $GL^{+}(2,\mathbb{R})$-orbit is closed are exactly the surfaces whose Veech group is of continuous type.

\begin{proof}[Proof of Theorem 2.2]
We first prove that (i) implies (iii). Let $G$ be the Veech group of continuous type of a flat surface $(X,q)$. $\mathbb{C}^{\ast} \cong \left\lbrace\begin{pmatrix} a\cos(\theta) & -a\sin(\theta) \\ a\sin(\theta) & a\cos(\theta) \end{pmatrix} \mid a \in \mathbb{R}^{\ast},\ \theta \in \mathbb{R} \right\rbrace$ and $G$ generate $GL^{+}(2,\mathbb{R})$. So the action of $\mathbb{C}^{\ast}$ by rotations and homotheties is an affine parametrization of $GL^{+}(2,\mathbb{R}).(X,q)$. Therefore, $GL^{+}(2,\mathbb{R}).(X,q)$ is an affine plane that is closed.\newline

Then we prove that (iii) implies (ii). If there are two saddle connections of different directions in a flat surface with poles of higher order $(X,q)$, pushing the surface $(X,q)$ through the contraction flow along a direction $\theta$ that is not a direction of saddle connection gives in the limit a surface $(X_{0},q_{0})$ whose saddle connections share the same direction $\theta$. So $(X_{0},\omega_{0}) \notin GL^{+}(2,\mathbb{R}).(X,q)$ which is not closed. We proved by contraposition that (iii) implies (ii).\newline

Finally, a flat surface $(X,q)$ such that all saddle connections share the same direction has a Veech group that is of continuous type, see the proof of Theorem 2.1 for details. Therefore (ii) implies (i). This ends our circular reasoning.\newline
\end{proof}

While surfaces with a closed $GL^{+}(2,\mathbb{R})$-orbit are quite exceptional, most flat surfaces with poles of higher order have a closed $SL(2,\mathbb{R})$-orbit. Theorem 2.3 characterizes the surfaces whose $SL(2,\mathbb{R})$-orbit is closed in the ambient stratum.

\begin{proof}[Proof of Theorem 2.3]
We consider a flat surface $(X,q)$ with poles of higher order such that there are two saddle connections of different directions in $\partial\mathcal{C}(X)$. As their holonomy vectors are defined with an ambiguity of $\pm1$, we consider one of the representatives in $\mathbb{C}$ for each of these two saddle connections. We denote by $w$ and $z$ the representatives. They form a basis of $\mathbb{C}$ as a $\mathbb{R}$-vector space. Every surface $(Y,q')$ in the $SL(2,\mathbb{R})$-orbit closure of $(X,q)$ arises as the limit of a sequence $A_{n}.(X,q)$ where $\forall n\geq0$, we have $A_{n} \in SL(2,\mathbb{R})$. As there are finitely many saddle connections in $\partial\mathcal{C}(Y)$, sequences $A_{n}.w$ and $A_{n}.z$ converge respectively to $z_{\infty}$ and $w_{\infty}$ in $\mathbb{C}$. Since matrices in $SL(2,\mathbb{R})$ are area-preserving, $(z_{\infty},w_{\infty})$ is a basis of $\mathbb{C}$ as well. A matrix of $SL(2,\mathbb{R})$ is characterized by the image of a basis. Thus, sequence $A_{n}$ converges to a limit $A_{\infty} \in SL(2,\mathbb{R})$. Therefore, $(Y,q')=A_{\infty}(X,q)$. In other words, every surface in the $SL(2,\mathbb{R})$-orbit closure of $(X,q)$ actually belongs to the orbit of $(X,q)$.\newline

If the Veech group of a flat surface with poles of higher order $(X,q)$ is of continuous type, then $GL^{+}(2,\mathbb{R}).(X,q)$ is closed (Theorem 2.2). As $SL(2,\mathbb{R})$ is a closed subgroup of $GL^{+}(2,\mathbb{R})$, $SL(2,\mathbb{R}).(X,q)$ is closed too. This covers situations where all saddle connections of $\partial\mathcal{C}(X)$ share the same direction and $core(X)$ is degenerated.\newline

Then we consider surfaces $(X,q)$ such that all saddle connections of $\partial\mathcal{C}(X)$ share the same direction $\theta$ and $core(X)$ is not degenerated. Without loss of generality, we suppose that $\theta$ is the horizontal direction. We are going to use the Iwasawa decomposition of $SL(2,\mathbb{R})$.\newline
Every $M \in SL(2,\mathbb{R})$ has a unique representation as $M=K.A.N$ where we have\newline
$K=\begin{pmatrix} 
\cos(\alpha) & -\sin(\alpha) \\
\sin(\alpha) & \cos(\alpha) 
\end{pmatrix}$,
$A=
\begin{pmatrix} 
r & 0 \\
0 & 1/r 
\end{pmatrix}$ and
$N=
\begin{pmatrix} 
1 & x \\
0 & 1 
\end{pmatrix}$ with $\alpha,x \in \mathbb{R}$ and $r>0$. In particular, $K \in SO(2,\mathbb{R})$.\newline

Every surface $(Y,q')$ in the $SL(2,\mathbb{R})$-orbit closure of $(X,q)$ arises as the limit of a sequence $M_{n}.(X,q)$ where $\forall n\geq0$, we have $M_{n} \in SL(2,\mathbb{R})$. Besides, $\forall n \geq0$, we have $M_{n}=K_{n}A_{n}N_{n}$ in the Iwasawa decomposition.\newline

First, $SO(2,\mathbb{R})$ is compact so there is $\alpha_{\infty} \in \mathbb{R}$ and an increasing subsequence $(\phi(n))_{n\geq0}$
such that we have:\newline
$$\lim\limits_{n \rightarrow +\infty} K_{\phi(n)}=
\begin{pmatrix} 
\cos(\alpha_{\infty}) & -\sin(\alpha_{\infty}) \\
\sin(\alpha_{\infty}) & \cos(\alpha_{\infty}) 
\end{pmatrix}
\in SO(2,\mathbb{R}).
$$\newline

We have
$A_{n}=
\begin{pmatrix} 
r_{n} & 0 \\
0 & 1/r_{n} 
\end{pmatrix}
$.
Since all saddle connections of $\partial\mathcal{C}(X)$ share the same direction, if there is an increasing subsequence $(\mu(n))_{n\geq0}$ such that $\lim\limits_{n \rightarrow +\infty} r_{\mu(n)}=+\infty$, then the length of the longest saddle connection in the boundary of the core of $M_{\mu(n)}.(X,q)$ grows to infinity. Therefore, $(r_{n})_{n\geq0}$ is bounded. For the same reasons (no saddle connection can shrink to saddle connection of zero length), $(r_{n})_{n\geq0}$ is bounded below by a positive number. That is why there is 
$r_{\infty}>0$ and an increasing subsequence $(\psi(n))_{n\geq0}$ such that $\lim\limits_{n \rightarrow +\infty} r_{\psi\circ\phi(n)}=r_{\infty}$.\newline
To recapitulate, we get an increasing subsequence $(\phi\circ\psi(n))_{n\geq0}$ such that we have:
$$\lim\limits_{n \rightarrow +\infty} K_{\phi\circ\psi(n)}.A_{\phi\circ\psi(n)}=
\begin{pmatrix} 
\cos(\alpha_{\infty}) & -\sin(\alpha_{\infty}) \\
\sin(\alpha_{\infty}) & \cos(\alpha_{\infty}) 
\end{pmatrix}
.
\begin{pmatrix} 
r_{\infty} & 0 \\
0 & 1/r_{\infty} 
\end{pmatrix}
.$$

Therefore, the $SL(2,\mathbb{R})$-orbit of $(X,q)$ is closed if and only if its $H$-orbit is closed where $H=\left\lbrace 
\begin{pmatrix} 
1 & x \\
0 & 1 
\end{pmatrix}
,
x \in
\mathbb{R}
\right\rbrace 
$.\newline

If $core(X)$ decomposes into finite volume cylinders with commensurable moduli, then the Veech group of $(X,q)$ is of cyclic parabolic type (Proposition 4.2) and the $H$-orbit of $(X,q)$ is closed. Then, its $SL(2,\mathbb{R})$-orbit is closed too.\newline

The last case is about surfaces $(X,q)$ such that all saddle connections of $\partial\mathcal{C}(X)$ share the same direction (without loss of generality, we suppose this is the horizontal direction) and $core(X)$ does not decompose into a family of cylinders of commensurable moduli. Either there is a minimal component or two cylinders whose moduli are not commensurable. Following Proposition 4.2, the Veech group of such a surface $(X,q)$ is not of cyclic parabolic type and its $H$-orbit is not periodic.\newline
We denote by $\mathcal{I}\mathcal{C}(X)$ the interior of $core(X)$. Since the boundary of $\mathcal{U}$ is made of horizontal saddle connections whose total holonomy is zero, we can glue the boundary components on each other while adding marked points if necessary or modifying the order of the conical singularities. We get a flat surface $(X_{c},q_{c})$ without poles of higher order but with some marked horizontal saddle connections (those that have been glued on each other). Flat surface $(X_{c},q_{c})$ belongs to a stratum $\mathcal{Q}$ of meromorphic quadratic differentials with at most simple poles.\newline
No saddle connection shrinks under the action of $H$. Thus, lengths of saddle connections remain bounded below by a positive number all along the orbit. Therefore $H.(X_{c},q_{c})$ lies in a compact subset of $\mathcal{Q}$. So if $H.(X_{c},q_{c})$ is closed, then it is also compact and consquently $H.(X_{c},q_{c})$ is a periodic orbit. However, as the action of $H$ preserve the marked horizontal saddle connections, the fact that $H.(X,q)$ is not periodic implies that $H.(X_{c},q_{c})$ is not periodic either. Therefore, $H.(X_{c},q_{c})$ is not closed. As the action of $H$ preserve the marked horizontal saddle connections, the $H$-orbit of $(X,q)$ is not closed either. Consequently, the $SL(2,\mathbb{R})$-orbit of $(X,q)$ is not closed in this case. This ends the proof.\newline
\end{proof}

\section{Generic Veech groups}

Just as flat surfaces of finite area, the typical flat surface with poles of higher order has a trivial Veech group. Theorem 2.4 provides a similar result to what Möller proved in \cite{Mo}. As a result of our definition of a Veech group, we do not have to deal with hyperellipticity.\newline

\begin{proof}[Proof of Theorem 2.4]
Strata of dimension one are strata of flat surfaces of genus zero with $n+p=3$ singularities. Automorphisms of the sphere allow to fix three points so there is a unique surface up to scaling. All of these flat surfaces have a unique saddle connection and therefore their core is degenerated. Following proof of Theorem 2.1, their Veech group is of continuous type.\newline

In strata of dimension at least three, for any flat surface $(X,q)$, we consider the $\mathbb{Z}$-module $A$ in $\mathbb{C}$ generated by the holonomy vectors of the relative homology classes. For a generic choice of periods of the relative homology, there is no matrix of $GL^{+}(2,\mathbb{R})$ that preserves $A$. Therefore, the Veech group of the generic surface in such strata is trivial.\newline

In strata of dimension two, flat surfaces whose Veech group is of continuous type lie a real codimension one subset of the stratum (holonomy vectors of all relative homology classes are $\mathbb{R}$-colinear). Therefore, outside this locus, the Veech group of any surface is discrete.\newline

If the Veech group of a flat surface is of cyclic parabolic type, then following Proposition 4.2, the core decomposes into finite volume cylinders. If there are more than one cylinder in this decomposition, there are more than two complex parameters of deformation of the flat metric. So the core of such a flat surface is formed by a unique cylinder and maybe other saddle connections.\newline
Following Theorem 3.7, the maximal number $|A|$ of noncrossing saddle connections is $6-\beta$. It is also equal to $2+t$. Since a cylinder is formed by at least two ideal triangles, the number $\beta$ of saddle connections in the boundary is at most $2$. If there were other saddle connections in the core other than those that belong to the cylider or its boundary, their two sides would belong to domains of poles and we would have $\beta \geq 3$. Consequently, in flat surfaces whose Veech group is of cyclic parabolic type, the whole core is a cylinder. Such a shape is shared by every flat surface of the same chamber, see Subsection 3.7.\newline
Strata where such chambers exist has already be characterized in \cite{Ta}. These chambers appear in any stratum of the form $\mathcal{Q}(a,-a)$. When the genus is zero, the closed geodesics of a cylinder cuts out the surface into two connected components where the degrees of the singularities sum to $-2$. Therefore, such chambers exist in strata $\mathcal{Q}(a,-1^{2},-2-a)$ with $a\geq1$ and $\mathcal{Q}(a,b,-a-2,-b-2)$ with $a,b\geq1$.\newline
Outside the previous chambers and the locus where the Veech group of the surfaces is of continuous type, surfaces have a finite Veech group. The Veech group preserves the boundary of the core and its action is that of a finite rotation group. In particular, its action does not preserve any direction. If the Veech group of a flat surface is not trivial, then the saddle connections of boundary of the core cannot belong to the same direction. Since the maximal number $|A|$ of noncrossing saddle connections is $6-\beta$, the boundary of the core is formed by at most three saddle connections. There are two loci where the Veech group may be nontrivial:\newline
(i) The core is an ideal triangle and its boundary is formed by three saddle connections.\newline
(ii) The core is degenerated and formed by two saddle connections.\newline 
In particular, if the core is not degenerated and bounded by two saddle connections, the core is formed by two triangles and is a cylinder. This case has already been settled.\newline
These loci are chambers in the stratum because they are characterized by the topological shape of the core.\newline

In the case (i), the only nontrivial action for the Veech group is to send each of the three directions of thee saddle connections of the triangle to the next one in the cyclic order. In this case, the action is conjugated to the action of the rotations of order four. The Veech group in $GL^{+}(2,\mathbb{R})/ \lbrace\pm Id\rbrace$ is therefore isomorphic to $\mathbb{Z}/3\mathbb{Z}$.\newline

In strata where $g=0$ and $p=3$, a nontrivial action should act faithfully on the domains of the poles. Therefore, they should have the same degree. In such strata $\mathcal{Q}(3b-4,-b,-b,-b)$ with $b\geq2$, there is a unique chamber formed by surfaces where the three domains of poles (topological disks) are glued on the three sides of an ideal triangle.\newline

In strata where $g=0$ and $n=p=2$, two of the three saddle connections would lie between two distinct conical singularities while the other would be closed. Therefore, the Veech group could not act nontrivially on the sides of the ideal triangle.\newline

In strata where $g=0$ and $n=3$, the three vertices of the triangle would be distinct conical singularities. The Veech group would act faithfully on them so they should have the same degree. In such strata $\mathcal{Q}(a,a,a,-3a-4)$ with $a\geq 1$, there is a unique chamber formed by surfaces where the core is a triangle. Indeed, up to an action of $GL^{+}(2,\mathbb{R})$, we can assume the triangle is equilateral. The angles of the domain of poles have the same magnitude $\dfrac{(5+3a)\pi}{3}$.\newline

In strata $\mathcal{Q}(a,-a)$ with $a\geq2$, we consider a flat surface where the core is a triangle. There are three angles in the triangle and three angles in the domain of the pole. For every pair of sides of the triangle, there is a loop that leave the triangle by the first side and then return to the triangle by the second side. There are three such elementary loops. If there is a nontrivial action of the Veech group, then these elementary loops have the same topological degree $\dfrac{k}{2}$ where $k$ is an integer number. A direct computation of the angles concerned by the loops show that the total angle in the surface is $2\pi +3k\pi$. Therefore, such a surface is in $\mathcal{Q}(3k,-3k)$ with $k\geq 1$. Since topological indexes and the shape of the core are the same for every surface of a given chamber. Flat surfaces of $\mathcal{Q}(3k,-3k)$ where the Veech group is isomorphic to $\mathbb{Z}/3\mathbb{Z}$ are exactly those whose core is a triangle and whose elementary loops have the same degree.\newline

In the case (ii), the only nontrivial action for the Veech group is to interchange the two directions of the saddle connections. In this case, the action is conjugated to the action of the rotations of order four. The Veech group in $GL^{+}(2,\mathbb{R})/ \lbrace\pm Id\rbrace$ is therefore isomorphic to $\mathbb{Z}/2\mathbb{Z}$. In a bidimensional stratum, if the Veech group of a flat surface $(X,q)$ is not of continuous type, the $GL^{+}(2,\mathbb{R})$-orbit of $(X,q)$ is an open subset of the stratum. Since the Veech group is invariant in a $GL^{+}(2,\mathbb{R})$-orbit, we can assume saddle connections have the same length and the magnitude of every angle is a multiple of $\dfrac{\pi}{2}$.\newline

In strata where $g=0$ and $n=p=2$, there are two shapes of degenerate core. In the first case, there is a closed saddle connection and one saddle connection whose ends are distinct conical singularities. The Veech group should preserve the closed saddle connection and its action could not be conjugated to rotations of order four. Therefore, we assume surfaces are formed by two domains of poles separated by a pair of saddle connections ending in distinct conical singularities. Residues at the poles of even degree are globally preserved by the action of the Veech group so we can assume poles are of odd degree. If the poles have not the same degree, then every domain of pole is preserved by rotations of order four and thus the two angles of each domain of pole are congruent. Consequently, the two conical singularities have the same degree. It is clear that the flat surface of $\mathcal{Q}(k+l-1,k+l-1,-2k-1,-2l-1)$ with $k,l \geq 1$ whose angles are $\dfrac{(1+2k)\pi}{2}$, $\dfrac{(1+2k)\pi}{2}$, $\dfrac{(1+2l)\pi}{2}$ and $\dfrac{(1+2l)\pi}{2}$ has a Veech group isomorphic to $\mathbb{Z}/2\mathbb{Z}$.\newline
If on the contrary, the two poles have the same degree whereas the two conical singularities have a different degree, then the action of the Veech group permutates the two angles of each conical singularity. It is possible only if the total angle of each conical singularity is an odd multiple of $\pi$. It is clear that the flat surface of $\mathcal{Q}(2k-1,2l-1,-k-l-1,-k-l-1)$ with $k,l \geq 1$ whose angles are $\dfrac{(1+2k)\pi}{2}$, $\dfrac{(1+2k)\pi}{2}$, $\dfrac{(1+2l)\pi}{2}$ and $\dfrac{(1+2l)\pi}{2}$ has a Veech group isomorphic to $\mathbb{Z}/2\mathbb{Z}$.\newline

In strata where $g=0$ and $n=1$, there is a central domain of pole and two peripherical domains of poles. If the Veech group acts by rotations of order four, the directions of the two saddle connections are permuted so the two peripherical poles have same degree. Similarly, the two angular sectors in the central domain of pole should be congruent and of the form $\dfrac{(2k+1)\pi}{2}$ with $k \geq 1$. Such a flat surface belongs to a stratum $\mathcal{Q}(2b+2k-3,-b,-b,-2k-1)$ with $b\geq2$ and $k\geq1$. It is clear that such a flat surface is invariant by rotations of order four.\newline

In strata where $g=0$ and $n=3$, there is a central conical singularity from which start the two saddle connections ending in peripherical conical singularities. There are two angular sectors around the central conical singularity and rotations of order four should transform each into the other. Thus they have the same magnitude $\dfrac{(2k+1)\pi}{2}$ with $k \geq 1$. The central conical singularity is of degree $2k-1$. Similarly, the peripherical conical singularities should have the same degree $a$. Consequently, in strata where $g=0$ and $n=3$, flat surfaces with a Veech group isomorphic to $\mathbb{Z}/3\mathbb{Z}$ are those that belong to the unique $GL^{+}(2,\mathbb{R})$-orbit of a surface this such precribed angles in strata $\mathcal{Q}(2k-1,a,a,-2a-2k-3)$ with $k\geq1$ and $a \in \lbrace-1\rbrace \cup \mathbb{N}^{\ast}$.\newline

In strata $\mathcal{Q}(a,-a)$ with $a\geq2$, there are four angles in the flat surface. Since they are angles between distinct saddle connections, their magnitude is of the form $\dfrac{(2k+1)\pi}{2}$. If the Veech group acts by rotation of order four, then these angles have the same magnitude and the total angle in the flat surface is $(4k+2)\pi$. Therefore, the Veech group is isomorphic to $\mathbb{Z}/2\mathbb{Z}$ if and only if the flat surface belongs to the unique open $GL^{+}(2,\mathbb{R})$-orbit of the surface with four congruent angles in $\mathcal{Q}(4k,-4k)$ with $k\geq1$. Such flat surfaces have trivial holonomy.\newline
\end{proof}

\begin{ex}
There is a chamber $\mathcal{C}$ in $\mathcal{Q}(2,-2)$ such that every surface in $\mathcal{C}$ has a Veech group of cyclic parabolic type, see Figure 4.

\begin{figure}
\includegraphics[scale=0.3]{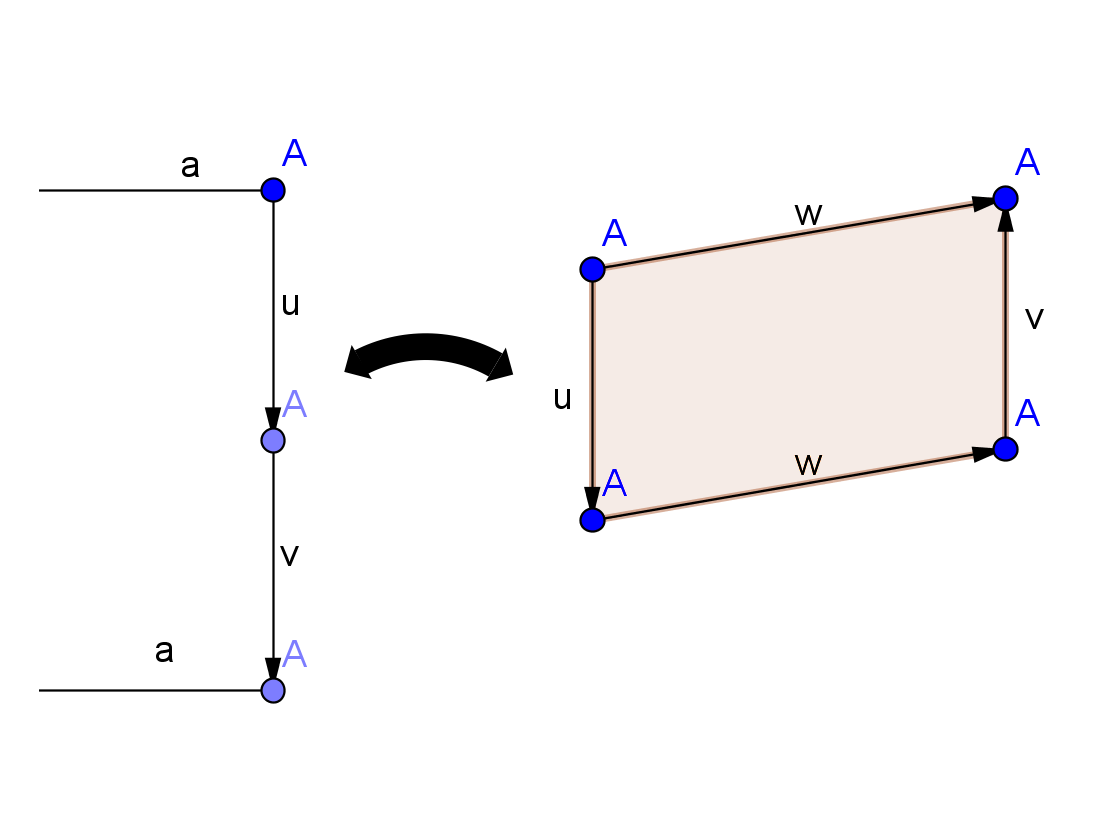}
\caption{A surface in $\mathcal{Q}(2,-2)$ whose Veech group is of cyclic parabolic type.}
\end{figure}

\end{ex}

\nopagebreak
\vskip.5cm
\end{document}